\author{Nick Bezhanishvili}
\author{Antonio Maria Cleani}
\title{Blok-Esakia Theorems via Stable  Canonical Rules}
\address{Institute for Logic, Language and Computation, University of Amsterdam}
\address{Department of Philosophy, University of Southern California}
\theoremstyle{plain}
\newtheorem{theorem}{Theorem}[section]
\newtheorem{proposition}[theorem]{Proposition}
\newtheorem{lemma}[theorem]{Lemma}
\newtheorem{corollary}[theorem]{Corollary}
\theoremstyle{definition}
\newtheorem{definition}[theorem]{Definition}
\newtheorem{remark}[theorem]{Remark}
\newtheorem{convention}[theorem]{Convention}
\newcommand{\z}{Zakharyaschev}
\newcommand{\pastlog}{\blacklozenge}
\newcommand{\pastsquare}{\blacksquare}
\newcommand{\sisquare}{\boxtimes}
\newcommand{\logic}[1]{\mathtt{#1}}
\newcommand{\alg}[1]{\mathfrak{#1}}
\newcommand{\spa}[1]{\mathfrak{#1}}
\newcommand{\class}[1]{\mathcal{#1}}
\newcommand{\op}[1]{\mathsf{#1}}
\newcommand{\lat}[1]{\mathbf{#1}}
\newcommand{\chunk}{\mathit{ch}}
\newcommand{\dualspa}[1]{\alg{#1}_*}
\newcommand{\dualalg}[1]{\spa{#1}^*}
\newcommand{\dualkr}[1]{\alg{#1}_+}
\newcommand{\dualcalg}[1]{\spa{#1}^+}
\newcommand{\ops}[1]{\mathit{op}(#1)}
\newcommand{\clop}[2][]{\op{Clop}_{#1}(\spa{#2})}
\newcommand{\clopup}[2][]{\op{ClopUp}_{#1}(\spa{#2})}
\newcommand{\upset}[2][]{{\Uparrow_{#1}} #2}
\newcommand{\downset}[2][]{{\Downarrow_{#1}} #2}
\providecommand{\leftsquigarrow}{%
	\mathrel{\mathpalette\reflect@squig\relax}%
}
\newcommand{\reflect@squig}[2]{%
	\reflectbox{$\m@th#1\rightsquigarrow$}%
}
\newcommand{\scrmod}[2]{\mu(\alg{#1}, {#2})}
\newcommand{\scr}[2]{{\xi}(\alg{#1}, {#2})}
\newcommand{\scrsi}[2]{\eta(\alg{#1}, {#2})}
\newcommand{\ruletrans}[2]{\mu_\circ(\alg{#1}, {#2})}
\newcommand{\from}{\leftarrow}
\newcommand{\gen}{\rotatebox[origin=c]{180}{\ensuremath\neg}}
\newcommand{\sig}{s}
\newcommand{\simod}{1}
\newcommand{\bsiten}{2}
\newcommand{\msimod}{3}
\newcommand{\least}[1][]{\tau_{#1}}
\newcommand{\greatest}[1][]{\sigma_{#1}}
\newcommand{\fragment}[1][]{\rho_{#1}}
\newcommand{\transl}[1][]{T_{#1}}
\tikzset{
	modal/.style={>=stealth,shorten >=1pt,shorten <=1pt,auto,node distance=1.5cm,
		semithick},
	world/.style={rectangle,draw,minimum size=0.8cm,fill=gray!15},
	point/.style={circle,draw,inner sep=0.5mm,fill=black},
	reflexive above/.style={->,loop,looseness=5,in=120,out=60},
	reflexive below/.style={->,loop,looseness=5,in=240,out=300},
	reflexive left/.style={->,loop,looseness=5,in=150,out=210},
	reflexive right/.style={->,loop,looseness=5,in=30,out=330}
}
\tikzset{roundnode/.style={draw,circle,inner sep=1pt,outer sep=0pt}
}
\tikzset{irr/.style={draw,circle,fill,inner sep=1pt,outer sep=0pt}
}
\tikzset{>={Computer Modern Rightarrow[width=2.5pt,length=2pt]}}
\tikzset{label distance=-3pt}
\pgfplotsset{compat=1.17}
\begin{document}
\maketitle

	\begin{abstract}
	We present a new uniform method for studying modal companions of superintuitionistic rule systems and related notions, based on the machinery of stable canonical rules. Using this method, we obtain  alternative proofs of the Blok-Esakia theorem and of the Dummett-Lemmon conjecture for rule systems. Since stable canonical rules may be developed for any rule system admitting filtration, our method generalizes smoothly to richer signatures. Using essentially the same argument, we obtain a proof of an analogue of the Blok-Esakia theorem for bi-superintuitionistic and tense rule systems, and of the Kuznetsov-Muravitsky isomorphism between rule systems extending the modal intuitionistic logic $\logic{KM}$ and modal rule systems extending the provability logic  $\logic{GL}$. In addition, our proof of the Dummett-Lemmon conjecture also generalizes to the bi-superintuitionistic and tense cases. 
\end{abstract}

	\section*{Introduction}

 A modal companion of a superintuitionistic logic $\mathtt{L}$ is defined as any normal modal logic $\mathtt{M}$ extending  $\mathtt{S4}$ such that the \emph{Gödel translation} fully and faithfully embeds  $\mathtt{L}$ into $\mathtt{M}$. The notion of a modal companion has sparked a remarkably prolific line of research, documented, e.g., in the surveys \cite{ChagrovZakharyashchev1992MCoIPL} and \cite{WolterZakharyaschev2014OtBET}. The jewel of this research line is the celebrated \emph{Blok-Esakia theorem}, first proved independently by \citet{Blok1976VoIA} and \citet{Esakia1976OMCoSL}. The theorem states that the lattice of superintuitionistic logics is isomorphic to the lattice of normal extensions of Grzegorczyk's modal logic $\mathtt{Grz}$, via the mapping which sends each superintuitionistic logic $\mathtt{L}$ to the normal extension of $\mathtt{Grz}$ with the set of all Gödel translations of formulae in $\mathtt{L}$. 

\citet{Zakharyashchev1991MCoSLSSaPT} developed a unified approach to the theory of modal companions, via his technique of \emph{canonical formulae}. These formulae generalize the subframe formulae of \citet{Fine1985LCKPI}. Like a subframe formula, a canonical formula syntactically encodes the structure of a finite \emph{refutation pattern}, i.e., a finite transitive frame together with a (possibly empty) set of parameters. By applying a version of the \emph{selective filtration} construction, every formula can be matched with a finite set of finite refutation patterns, in such a way that the conjunction of all the canonical formulae associated with the refutation patterns is equivalent to the original formula. By studying how the Gödel translation affects superintuitionistic canonical formulae, 
Zakharyashchev gave alternative proofs of classic theorems in the theory of modal companions, and extended this theory with several novel results.  Among these, he confirmed the \emph{Dummett-Lemmon conjecture}, formulated in \cite{DummettLemmon1959MLbS4aS5}, which states that a superintuitionistic logic is Kripke complete iff its weakest modal companion is. \citet{Jerabek2009CR} generalized canonical formulae to \emph{canonical rules}, and applied this notion to extend \z's approach to theory of modal companions to \emph{rule systems} (also known as \emph{multi-conclusion consequence relations.})

In \cite{BezhanishviliEtAl2016SCR,BezhanishviliEtAl2016CSL,BezhanishviliBezhanishvili2017LFRoHAaCF}, \emph{stable canonical formulae} and \emph{rules} were introduced as an alternative to Zakharyaschev and Je\v{r}ábek-style canonical rules and formulae. The basic idea is the same: a stable canonical formula or rule syntactically encodes the semantic structure of a finite refutation pattern. The main difference lies in how such structure is encoded, which affects how refutation patterns are constructed in the process of rewriting a formula (or rule) into a conjunction of stable canonical formulae (or rules). Namely, in the case of stable canonical formulae and rules finite refutation patterns are constructed by taking \emph{filtrations} rather than selective filtrations of countermodels. A survey of stable canonical formulae and rules can be found in \cite{Bezhanishvili2023JFaATfIL}.

In this paper, we apply stable canonical rules to develop a novel, uniform approach to the study of modal companions and similar notions in richer signatures. Our approach echoes the \z-Je\v{r}ábek approach in using rules encoding finite refutation patterns, but also bears circumscribed similarities with Blok's original algebraic approach in some proof strategies (see \Cref{remark:blok}). Our techniques deliver central results in the theory of modal companions through transparent arguments. In particular, we obtain an alternative proof of the Blok-Esakia theorem for both logics and rule systems, and generalize the Dummett-Lemmon conjecture to rule systems. 

Due to the flexibility of filtration, our techniques generalize smoothly to rule systems in richer signatures. To illustrate this, we apply our method to the study of \emph{tense companions} of bi-superintuitionistic deductive systems and to the study of (mono)modal companions of modal intuitionistic rule systems above $\logic{KM}$. In each of these cases, we prove analogues of the Blok-Esakia theorem. When restricted to logics, these results were proved, respectively, by \citet[Theorem 23]{Wolter1998OLwC} and \cite[Proposition 3]{KuznetsovMuravitsky1986OSLAFoPLE}, though they appear to be new for rule systems. In the case of tense companions, in addition, we also prove an analogue of the Dummett-Lemmon conjecture for rule systems, which also appears to be novel. 

Notably, in each of these three cases, our main results are proved by essentially the same arguments. By contrast, generalizing the \z-Je\v{r}ábek technique beyond the case of modal companions of superintuitionistic logics is far from straightforward. In particular, as we argue towards the end of \Cref{sec:scr}, it is far from clear whether  the \z-Je\v{r}ábek technique generalizes to the case of tense companions, since selective filtration does not work well for bi-superintuitionistic and tense logics.

The techniques described in this paper can also be used to obtain axiomatic characterizations of the modal companion maps (and their counterparts in the richer signatures discussed here) in terms of stable canonical rules, as well as some results concerning the notion of stability \cite{BezhanishviliEtAl2018SML}. These results can be found in the recent master's thesis \cite{Cleani2021TEVSCR}, on which the present paper is based.

The paper is organized as follows. We begin by reviewing some general preliminaries in \Cref{sec:preliminaries}, followed by the basic constructions in the theory of modal companions in \Cref{sec:mappings}. We then introduce stable canonical rules in \Cref{sec:scr}, generalizing known constructions to the bi-superintuitionistic and tense case. In \Cref{sec:mc} we present our proof of a general Blok-Esakia theorem, which uniformly applies to each of the three notions of companions we are interested in. Finally, in \Cref{sec:dummettlemmon} we present our proof of a general Dummett-Lemmon conjecture, applying to both modal and tense companions. We conclude in \Cref{sec:conclusion}.

	\section{Preliminaries}

        \label{sec:preliminaries}

	We review some basic facts about rule systems and their interpretation over algebras, topological spaces and Kripke frames. The reader may consult the following references for more detailed information:  \cite{Iemhoff2016CRaAR} for rule systems in general; \cite{ChagrovZakharyaschev1997ML,Jerabek2009CR} for modal and superintuitionistic rule systems;  \cite{BurrisSankappanavar1981ACiUA} for universal algebra; \cite{Johnstone1982SS,Esakia2019HADT} for duality theory. 

	\subsection{Rule Systems}

	Throughout the paper we fix a countably infinite set of propositional variables $\mathit{Prop}$. For a signature $\nu$ (a finite set of propositional connectives), the set $\mathit{Frm}_\nu$ of \emph{$\nu$-formulae} is built from $\mathit{Prop}$ using the connectives in $\nu$ in the usual way. A \emph{substitution} is a map $s:\mathit{Frm}_\nu(\mathit{Prop})\to \mathit{Frm}_\nu(\mathit{Prop})$ which commutes with the operators in $\nu$. 

	A \emph{rule} in signature $\nu$ is a pair  $(\Gamma, \Delta)$ such that $\Gamma, \Delta$ are finite subsets of $\mathit{Frm}_\nu$. In case $\Delta=\{\varphi\}$ we write $\Gamma/\Delta$ simply as $\Gamma/\varphi$, and analogously if $\Gamma=\{\psi\}$. Moreover, we write $/\varphi$ for the rule $\varnothing/\varphi$. A rule is said to be \emph{single-conclusion} when of the form $\Gamma/\varphi$, and \emph{assumption free} when of the form $/\Delta$.  We use $;$ to denote union between finite sets of formulae, so that $\Gamma; \Delta=\Gamma\cup \Delta$ and $\Gamma; \varphi=\Gamma\cup \{\varphi\}$. We let $\mathit{Rul}_\nu$ be the set of all rules in $\nu$. 

	\begin{definition}
		A \emph{rule system}\footnote{Rule systems are also called \emph{multiple-conclusion consequence relations} (e.g., in \cite{BezhanishviliEtAl2016SCR,Iemhoff2016CRaAR}). We prefer the terminology of rule systems (used in \cite{Jerabek2009CR}) for brevity.} in signature $\nu$  is a set $\logic{S}\subseteq \mathit{Rul}_\nu$ satisfying the following conditions:
		\begin{enumerate}
			\item If $\Gamma/\Delta\in \logic{S}$, then $ s[\Gamma]/ s[\Delta]\in \logic{S}$ for all substitutions $s$ (structurality);
			\item $\varphi/\varphi\in \logic{S}$ for every formula $\varphi$ (reflexivity);
			\item If $\Gamma/\Delta\in \logic{S}$, then $\Gamma;\Gamma'/\Delta;\Delta'\in \logic{S}$ for any finite sets of formulae $\Gamma',\Delta'$ (monotonicity);
			\item If $\Gamma/\Delta;\varphi\in\logic{S}$ and $\Gamma;\varphi/\Delta\in \logic{S}$, then $\Gamma/\Delta\in \logic{S}$ (cut).
		\end{enumerate}
	\end{definition}
	If $\class{S}$ is a set of rule systems and  $\Sigma, \Xi$ are sets of rules, we write $\Xi\oplus_{\class{S}}\Sigma$ for the least rule system in $\class{S}$, if it exists, extending both $\Xi$ and $\Sigma$. A set of rules $\Sigma$ is said to \emph{axiomatize} a rule system $\logic{S}\in \class{S}$ \emph{over} some rule system $\logic{S}'\in \class{S}$ if $\logic{S}'\oplus_{\class{S}}\Sigma=\logic{S}$. Normally, we will take $\class{S}$ to be the set of all extensions of some particular rule system. When $\class{S}$ is clear from context, we write simply $\oplus$ instead of $\oplus_{\class{S}}$. 

	In  this paper we will work with rule systems in 5 different signatures. 
	\begin{itemize}
		\item The \emph{modal} signature $m:=\{\land, \neg, \bot, \square\}$;
		\item The \emph{tense} signature $t:=\{\land, \neg, \bot, \square, \pastlog\}$;
		\item The \emph{superintuitionistic} (si) signature $si:=\{\land, \lor, \to, \bot, \top\}$;
		\item The \emph{bi-superintuitionistic} (bsi) signature $bsi:=\{\land, \lor, \to, \from, \bot, \top\}$; 
		\item The \emph{modal superintuitionistic} (msi) signature $msi:=\{\land, \lor, \to, \sisquare, \top, \bot\}$.
	\end{itemize}
	When working in the modal and tense signatures, we will treat the other Boolean and modal connectives as defined in the usual way. We will denote the duals of $\square$ and $\pastlog$ as $\lozenge$ and $\pastsquare$ respectively. In the bsi signature we also use the abbreviations
	\[\neg p:= p\to \bot,\qquad \gen p:=\top\from p.\]

	For each unary propositional connective $\heartsuit$, we define the rules
	\begin{align}
		&/\heartsuit (p\to q)\to (\heartsuit p\to \heartsuit q), \tag{K$_\heartsuit$}\\
		&\varphi/\heartsuit\varphi. \tag{Nec$_\heartsuit$}
	\end{align}
	A \emph{normal modal rule system} is a rule system in the signature $m$ containing the rule $/\varphi$ whenever $\varphi$ is a theorem of the Classical Propositional Calculus, as well as the rules (K$_\square$), (Nec$_\square$) and
	\[\varphi\to \psi, \varphi/\psi. \label{mp}\tag{MP}\]
	A \emph{normal tense rule system} is a rule system in the signature $t$, whose $\square$-free and $\pastlog$-free fragments are each a normal modal rule system (with respect to $\square$ and $\pastlog$ respectively) and which, in addition, contains the rule
	\[/\varphi\to \square \pastlog \varphi.\label{t}\tag{t}\]
	We will henceforth omit the prefix ``normal.''

	A \emph{si rule system} is a rule system in the signature $si$ containing the rule $/\varphi$ whenever $\varphi$ is a theorem of the intuitionistic propositional calculus $\logic{IPC}$, as well as the rule (\ref{mp}). A \emph{bsi rule system} is a rule system in the signature $bsi$ containing the rule $/\varphi$ whenever $\varphi$ is a theorem of the bi-intutionistic propositional calculus $\logic{biIPC}$, as well as the rules (\ref{mp}) and (Nec$_{\neg\gen}$). We refer the reader to \cite[Ch. 12]{ChagrovZakharyaschev1997ML} and \cite{Rauszer1974AFotPCoHBL} for explicit axiomatizations of $\logic{IPC}$ and $\logic{biIPC}$ respectively. 
	
	Finally, a \emph{msi rule system} is a rule system in the signature $msi$, 
	whose $si$ fragment is a si rule system and which, in addition, contains the rules (K$_\sisquare$) and (Nec$_\sisquare$), as well as the following:
	\begin{gather}
		/p\to \sisquare p\\
		{/}{\sisquare} p \to (q \lor (q\to p)).
	\end{gather}

	When $\logic{M}$ is a modal (resp.\ tense, msi) rule system, we write $\lat{NExt}(\logic{M})$ for the class of all modal (resp.\ tense, msi) rule systems extending $\logic{M}$. Similarly, when $\logic{L}$ is a si or bsi rule system, we write $\lat{Ext}(\logic{L})$ for the class of all si or bsi rule systems extending $\logic{L}$. We note that all these classes of rule systems form complete lattices, where the meet is intersection and the join is given by the $\oplus$ operation over the relevant class of rule systems.

	A \emph{(modal, tense, si, bsi or msi) logic} is a (modal, tense, si, bsi or msi) rule system which can be axiomatized, over the least rule system of the same kind, by a set of assumption-free, single conclusion rules.  Logics in this sense correspond one-to-one with logics conceived of as sets of formulae closed under appropriate conditions, a conception that much of the literature in the field of modal and superintuitionitstic logic shares. For example, the (normal) modal logics in the standard sense \cite[e.g.,][p. 113]{ChagrovZakharyaschev1997ML} correspond one-to-one with the normal modal rule systems axiomatizable by assumption-free, single conclusion rules. When $\logic{M}$ is a modal logic in this sense, there is always a corresponding  modal rule system $/\logic{M}$ axiomatized by $\{/\varphi:\varphi\in \logic{M}\}$. Conversely,  for any modal rule system $\logic{N}$ the set $\{\varphi:/\varphi\in \logic{N}\}$ is always a modal logic in the standard sense.

 \begin{convention}
     In view of this correspondence, we will use familiar names for standard logics in the literature to refer to the corresponding rule system: that is, when $\logic{S}$ names a standard (modal, tense, si, bsi or msi) logic we shall identify $\logic{S}$ with the rule system $/\logic{S}$ defined as above. Thus, for example, we write $\logic{K}$ for the least modal rule system,  $\logic{IPC}$ for the least si rule system, and so on. 
 \end{convention}

    The set $\class{L}$ of rule systems of a given kind which admit an assumption-free, single conclusion axiomatization forms a complete lattice. Moving forward, When $\lat{NExt}(\logic{M})$ (resp.\ $\lat{Ext}(\logic{L}))$ is a lattice of rule systems, we denote the corresponding lattice of logics as $\lat{NExtL}(\logic{M})$ (resp.\ $\lat{ExtL}(\logic{L}))$. 
    The join $\oplus_{\lat{NExtL}(\logic{M})}$ coincides with $\oplus_{\lat{NExt}(\logic{M})}$. However, the meets $\otimes_{\lat{NExtL}(\logic{M})}$ and $\otimes_{\lat{NExt}(\logic{M})}$ generally come apart: the meet in $\lat{NExt}(\logic{M})$ of two logics may itself fail to be a logic. Likewise in the si and bsi cases. 
    
    The meet $\otimes_{\lat{NExtL}(\logic{M})}$ can be characterized in terms of $\otimes_{\lat{NExt}(\logic{M})}$ as follows. If $\logic{N}\in \lat{NExt}(\logic{M})$ is a rule system, let $\op{Taut}(\logic{N})$ be the logic axiomatized over $\logic{M}$ by all assumption-free, single conclusion belonging to $\logic{N}$. Analogously, we define $\op{Taut}(\logic{L})$  when $\logic{L}$ is a si or bsi rule system.
    \begin{proposition}
        Let $\class{S}:=\lat{NExt}(\logic{M})$ $($resp.~$\lat{Ext}(\logic{L}))$ and let $\class{L}:=\lat{NExtL}(\logic{M})$ $($resp.\ $\lat{ExtL}(\logic{L}))$. Then the identity
        \[\otimes_{\class{L}}\{\logic{S}_i:i\in I\}=\op{Taut}\left(\otimes_{\class{S}}\{\logic{S}_i:i\in I\}\right). \]
        holds for all logics $\{\logic{S}_i:i\in I\}\subseteq \class{L}$. \label{rulesystemlogic}
    \end{proposition}
    \begin{proof}
    We prove this result for binary meets; the proof of the general case is completely analogous. Note first that $\op{Taut}$ is monotonic and for each rule system $\logic{S}\in \class{S}$ we have $\op{Taut}(\logic{S})\subseteq \logic{S}$. Now, clearly $\logic{S}\otimes_{\class{L}}\logic{S'}\subseteq\logic{S}\otimes_{\class{S}} \logic{S'}.$  So,  $\op{Taut}(\logic{S}\otimes_{\class{L}}\logic{S'})\subseteq\op{Taut}(\logic{S}\otimes_{\class{S}} \logic{S'})$ by monotonicity. But $\op{Taut}(\logic{S}\otimes_{\class{L}}\logic{S'})=\logic{S}\otimes_{\class{L}}\logic{S'}$, so $\logic{S}\otimes_{\class{L}}\logic{S'}\subseteq\op{Taut}(\logic{S}\otimes_{\class{S}} \logic{S'})$. Conversely, by $\op{Taut}(\logic{S}\otimes_{\class{S}} \logic{S'})\subseteq \logic{S}\otimes_{\class{S}} \logic{S'}$ it follows that $\op{Taut}(\logic{S}\otimes_{\class{S}} \logic{S'})$ is a logic below both $\logic{S}$ and $\logic{S'}$, and so $\op{Taut}(\logic{S}\otimes_{\class{S}} \logic{S'})\subseteq \logic{S}\otimes_{\class{L}}\logic{S'}$.
    \end{proof}

	Throughout the paper we will refer to a number of standard rule systems. We collect all of them in \Cref{systems}. 
	\begin{table}
		
		\begin{tabular}[h]{|l|l|}
			\hline
			\multicolumn{2}{|c|}{Modal rule systems} \\
			\hline\hline
			$\logic{K}$ & The least modal rule system\\
			\hline
			$\logic{K4}$ & $\logic{K}\oplus /\square p\to \square\square p$\\
			\hline
			$\logic{S4}$ & $\logic{K4}\oplus /\square p\to p$\\
			\hline
			$\logic{Grz}$ & $\logic{S4} \oplus /\square (\square (p\to \square p)\to p)\to p$\\
			\hline
			$\logic{GL}$ & $\logic{K4}\oplus /\square (\square p\to p)\to \square p$\\
			\hline\hline
			\multicolumn{2}{|c|}{Si and bsi rule systems}\\
			\hline\hline
			$\logic{IPC}$ & The least si rule system\\
			$\logic{biIPC}$ & The least bsi rule system \\
			\hline\hline
			\multicolumn{2}{|c|}{Tense rule systems}\\
			\hline\hline
			$\logic{K.t}$ & The least tense rule system\\
			\hline
			$\logic{K4.t}$ & $\logic{K}\oplus /\square p\to \square\square p \oplus /\pastlog\pastlog p\to \pastlog p$\\
			\hline
			$\logic{S4.t}$ & $\logic{K4}\oplus /\square p\to p \oplus /p\to \pastlog p$\\
			\hline
			$\logic{Grz.t}$ & $\logic{S4.t}\oplus /\square (\square (p\to \square p)\to p)\to p \oplus /p\to \pastlog (p\land \neg \pastlog (\pastlog p\land \neg p))$\\
			\hline\hline
			\multicolumn{2}{|c|}{Msi rule systems}\\
			\hline\hline
			$\logic{IPCK}$ & The least msi rule system\\
			$\logic{KM}$ & $\logic{IPCK}\oplus  /(\sisquare p\to p)\to p$\\
			\hline
		\end{tabular}
		\caption{Standard rule systems}
		\label{systems}
	\end{table}

	\subsection{Algebraic Semantics}

	We interpret rule systems over algebras in the same signature. If $\alg{A}$ is a $\nu$-algebra, we denote its carrier as $A$. Let $\alg{A}$ be some $\nu$-algebra. A \emph{valuation} on $\alg{A}$ is a map $V:\mathit{Frm}_\nu\to A$, satisfying the condition 
\[V(f(\varphi_1, \ldots, \varphi_n)):=f^{\alg{A}}(V(\varphi_1), \ldots, V(\varphi_n))\]
for each $f\in \nu$. A pair $(\alg{A}, V)$ where $\alg{A}$ is a $\nu$-algebra and $V$ a valuation on $\alg{A}$ is called a \emph{model}. A model $(\alg{A}, V)$ \emph{satisfies} a rule $\Gamma/\Delta$ when the following holds: if $V(\gamma)=1$ for all $\gamma\in \Gamma$, then $V(\delta)=1$ for some $\delta\in \Delta$. In this case, we write $\alg{A}, V\models \Gamma/\Delta$. A rule $\Gamma/\Delta$ is \emph{valid} on a $\nu$-algebra $\alg{A}$ when $\alg{A}, V\models\Gamma/\Delta$ holds for all valuations $V$ on $\alg{A}$.  When this holds we write $\alg{A}\vDash \Gamma/\Delta$, otherwise we write $\alg{A}\nvDash \Gamma/\Delta$ and say that $\alg{A}$ \emph{refutes} $\Gamma/\Delta$. We can extend this notion of validity to classes of $\nu$-algebras in the obvious way.

Write $\class{A}_\nu$ for the class of all $\nu$-algebras. For every rule system $\logic{S}$ we define 
\[\op{Alg}(\logic{S}):=\{\alg{A}\in \class{A}_\nu:\alg{A}\vDash \logic{S}\}.\]
 Conversely, if $\class{K}$ is a class of $\nu$-algebras we set
\begin{align*}
	\op{ThR}(\class{K})&:=\{\Gamma/\Delta\in \mathit{Rul}_\nu:\class{K}\vDash \Gamma/\Delta\}.\\
\end{align*}

A \emph{variety} (resp.\ \emph{universal class}) of $\nu$-algebras is a class of $\nu$-algebras closed under homomorphic images, subalgebras and direct products (resp.\ under isomorphic copies, subalgebras and ultraproducts). When $\class{K}$ is a class of $\nu$-algebras we write $\lat{Var}(\class{K})$ and $\lat{Uni}(\class{K})$ respectively for the class of subvarieties and of universal subclasses of $\class{K}$. It is well known that both $\lat{Var}(\class{K})$ and $\lat{Uni}(\class{K})$ admit the structure of a complete lattice. 
The meet operations of $\lat{Var}(\class{K})$ and $\lat{Uni}(\class{K})$, denoted  $\otimes_{\lat{Var}(\class{K})}$ and $\otimes_{\lat{Uni}(\class{K})}$ respectively, coincide. However, the joins $\oplus_{\lat{Var}(\class{K})}$ and $\oplus_{\lat{Uni}(\class{K})}$ generally come apart. We can characterize $\oplus_{\lat{Var}(\class{K})}$ in terms of $\oplus_{\lat{Uni}(\class{K})}$. For $\class{U}\in \lat{Uni}(\class{K})$, let $\op{Var}(\class{U})$ be the least variety in which $\class{U}$ is contained. 
\begin{proposition}
    Let $\class{K}$ be a class of $\nu$-algebras. Then the identity
\[\oplus_{\lat{Var}(\class{K})}\{\class{V}_i:i\in I\}=\op{Var}(\oplus_{\lat{Uni}(\class{K})}\{\class{V}_i:i\in I\})\]
   holds for any $\{\class{V}_i:i\in I\}\subseteq \lat{Var}(\class{K})$.\label{univarieties}
\end{proposition}
\begin{proof}
    Analogous to that of \Cref{rulesystemlogic}.
\end{proof}

Throughout the paper, we study the structure of lattices of rule systems via semantic methods. This is made possible by the following fundamental result connecting the syntactic types of rule system to closure conditions on the classes of algebras that validate them. \Cref{birkhoff} is widely known as \emph{Birkhoff's theorem}, after \cite{Birkhoff1935OtSoAA}.
\begin{theorem}[{\cite[Theorems II.11.9 and V.2.20]{BurrisSankappanavar1981ACiUA}}]
	For every class $\class{K}$ of $\nu$-algebras, the following conditions hold:\label{syntacticvarietiesuniclasses}
	\begin{enumerate}
		\item $\class{K}$ is a variety iff $\class{K}=\op{Alg}(\logic{S})$ for some set of $\nu$-formulae $\logic{S}$. \label{birkhoff}
		\item $\class{K}$ is a universal class iff $\class{K}=\op{Alg}(\logic{S})$ for some set of $\nu$-rules $\logic{S}$.
	\end{enumerate}
\end{theorem}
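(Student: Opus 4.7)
The plan is to apply the classical Birkhoff–Tarski template to each biconditional. For the ``$\Rightarrow$'' halves I would carry out the routine closure checks: validity of a $\nu$-formula $\varphi$ (i.e., $\bar V(\varphi)=1$ for every $V$) is preserved under isomorphic copies, subalgebras, direct products, and homomorphic images, so $\mathsf{Alg}(\mathtt{S})$ is a variety for any set of formulae $\mathtt{S}$. For rules $\Gamma/\Delta$, validity unravels into a universal first-order sentence over the signature of $\nu$-algebras extended with $1$---informally, ``for all tuples, if these polynomial terms equal $1$ then one of those does''---so by \L os's theorem it is preserved under ultraproducts, and trivially under isomorphisms and subalgebras; hence $\mathsf{Alg}(\mathtt{S})$ is closed under $\mathsf{I}, \mathsf{S}, \mathsf{P}_U$ whenever $\mathtt{S}$ is a set of rules.

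For the ``$\Leftarrow$'' direction of (1), I would use the standard free-algebra argument. Let $\mathcal{K}$ be a variety, set $\mathtt{S}:=\mathsf{Th}(\mathcal{K})$, and fix $\mathfrak{A}\in \mathsf{Alg}(\mathtt{S})$. The $\mathcal{K}$-free algebra $\mathfrak{F}_\mathcal{K}(A)$ on $|A|$ generators belongs to $\mathcal{K}$ (being a quotient of an absolutely free algebra by the congruence induced by a suitable product of members of $\mathcal{K}$), and admits a surjection onto $\mathfrak{A}$ because every equation valid in $\mathcal{K}$ also holds in $\mathfrak{A}$; closure under homomorphic images then gives $\mathfrak{A}\in \mathcal{K}$.

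For the ``$\Leftarrow$'' direction of (2), I would run the diagram--ultraproduct construction. Let $\mathcal{K}$ be universal, set $\mathtt{S}:=\mathsf{ThR}(\mathcal{K})$, and fix $\mathfrak{A}\in \mathsf{Alg}(\mathtt{S})$. For each finite piece $F$ of the positive-and-negative diagram of $\mathfrak{A}$ (finitely many term-identities-equal-to-$1$ that hold, together with finitely many that fail), the corresponding rule fails in $\mathfrak{A}$ and hence, by hypothesis, fails in some $\mathfrak{A}_F\in \mathcal{K}$ via a witnessing valuation. A standard cofinite-based ultrafilter on the index set of such fragments then produces $\prod_F \mathfrak{A}_F/\mathcal{U}\in \mathsf{P}_U(\mathcal{K})$ together with an embedding of $\mathfrak{A}$, and closure of $\mathcal{K}$ under $\mathsf{I}, \mathsf{S}, \mathsf{P}_U$ finishes the proof.

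The principal obstacle is the bookkeeping in this last step: one must arrange the fragments $F$ to cover every atomic and negated-atomic fact about $\mathfrak{A}$ and choose the ultrafilter so that \L os's theorem transfers these exactly, securing both injectivity and operation-preservation of the map $\mathfrak{A}\hookrightarrow \prod_F \mathfrak{A}_F/\mathcal{U}$. Beyond that, the entire argument is a direct transcription of the classical development in \cite{BurrisSankappanavar1981ACiUA}, the paper's multi-conclusion rule format being precisely the universal Horn-with-disjunction shape to which these methods apply.
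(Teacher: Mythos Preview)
The paper does not give its own proof of this theorem: it is stated as a preliminary fact with a citation to \cite[Theorems II.11.9 and V.2.20]{BurrisSankappanavar1981ACiUA}, and no argument is supplied. Your sketch is a correct outline of the classical proof found in that reference, so there is nothing to compare against beyond noting that you have reconstructed what the paper chose to import wholesale.
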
 
In this sense, $\nu$-logics correspond to varieties of $\nu$-algebras, whereas $\nu$-rule systems correspond to universal classes of $\nu$-algebras.

We now briefly describe the classes of algebras we shall use to interpret the rule systems under discussion in more detail, and review some of their basic properties. For further details on these structures, we point the reader to \cite{Esakia2019HADT,ChagrovZakharyaschev1997ML,Rauszer1974SBAaTAtILwDO,PedrosoDeLimaMartins2021BGAaCT,Kowalski1998VoTA,Venema2007AaC,Esakia2006TMHCaCMEotIL}.

A \emph{Heyting algebra} is a tuple $\alg{H}=(H, \land, \lor, \to, 0, 1)$ such that $(H, \land, \lor, 0, 1)$ is a bounded distributive lattice and for every $a, b, c\in A$ we have 
\[c\leq a\to b\iff a\land c\leq b.\]
A \emph{bi-Heyting algebra} is a tuple $\alg{H}=(H, \land, \lor, \to,\from, 0, 1)$ such that the $\from$-free reduct of $\alg{H}$ is a Heyting algebra, and such that for all $a, b, c\in H$ we have
\[a\from b\leq c\iff a\leq b\lor c.\]
Equivalently, a bi-Heyting algebra can be defined as a Heyting algebra $\alg{H}$ whose order dual is also a Heyting algebra, whose implication is defined by the identity
\[a\from b:=\bigwedge\{c\in H:a\leq b\lor c\}.\]

A \emph{modal} algebra is a tuple $\alg{M}=(M, \land, \lor, \neg, \square, 0, 1)$ such that $(M, \land, \lor, \neg, 0, 1)$ is a Boolean algebra and the following equations hold:
\begin{align}
	 \square 1&=1,\label{norm1} \\
	 \square(a\land b)&=\square a\land \square b.\label{norm2}
\end{align}
In any modal algebra $\alg{M}$ we can define the compound modality 
\begin{equation}
	\square^+a:=\square a \land a.\label{compound}
\end{equation}

A \emph{tense algebra} is a structure $\alg{M}=(M, \land, \lor, \neg, \square, \pastlog, 0, 1)$, such that both the $\square$-free and the $\pastlog$-free reducts of $\alg{M}$ are both modal algebras (the former with respect to the dual of $\pastlog$), and $\square, \pastlog$ form a residual pair. That is, for all $a, b\in M$ we have the following identity:
\begin{equation}
	\pastlog a \leq b\iff a\leq \square y.
\end{equation}

Finally, a \emph{frontal Heyting algebra} is a structure $\alg{H}=(H, \land, \lor, \to,\sisquare,  0, 1)$ whose $\sisquare$-free reduct is a Heyting algebra and such that $\boxtimes$ satisfies the identities (\ref{norm1}) and (\ref{norm2}), as well as the following inequalities:
\begin{align}
	a&\leq \boxtimes a,\\
	\boxtimes a&\leq b\lor (b\to a).
\end{align}

We write $\op{HA}, \op{biHA}, \op{MA}, \op{Ten}, \op{FHA}$ for the classes of Heyting algebras, bi-Heyting algebras, modal algebras, tense algebras and frontal Heyting algebras respectively. It is well known that all these classes are equationally definable, hence varieties by \Cref{syntacticvarietiesuniclasses}. What is more, their universal subclasses are algebraic counterparts of the rule systems introduced in the previous subsection, in the sense spelled out by the following theorem. 
\begin{theorem}
	The following maps are pairs of mutually inverse dual isomorphisms:\label{algebraization}
	\begin{itemize}
		\item $\op{Alg}:\lat{Ext}(\logic{IPC})\to \lat{Uni}(\op{HA})$ and $\op{ThR}:\lat{Uni}(\op{HA})\to \lat{Ext}(\logic{IPC})$.\label{algebraisationipc}
		\item $\op{Alg}:\lat{Ext}(\logic{biIPC})\to \lat{Uni}(\op{biHA})$ and $\op{ThR}:\lat{Uni}(\op{biHA})\to \lat{Ext}(\logic{biIPC})$.\label{algebraisationbiipc}
		\item $\op{Alg}:\lat{NExt}(\logic{K})\to \lat{Uni}(\op{MA})$ and $\op{ThR}:\lat{Uni}(\op{MA})\to \lat{NExt}(\logic{K})$.\label{algebraisationk}
		\item $\op{Alg}:\lat{NExt}(\logic{K.t})\to \lat{Uni}(\op{Ten})$ and $\op{ThR}:\lat{Uni}(\op{Ten})\to \lat{NExt}(\logic{K.t})$.\label{algebraisationkt}
		\item $\op{Alg}:\lat{NExt}(\logic{IPCK})\to \lat{Uni}(\op{FHA})$ and $\op{ThR}:\lat{Uni}(\op{FHA})\to \lat{NExt}(\logic{IPCK})$.\label{algebraisationfrtuni}
	\end{itemize}
     Furthermore, the items above remain true when we substitute $\lat{NExtL}$ $($resp.\ $\lat{ExtL})$ for $\lat{NExt}$ $($resp.\ $\lat{Ext})$ and $\lat{Var}$ for $\lat{Uni}$.
\end{theorem}
\begin{corollary}
	Every si (resp.\ bsi, modal, tense, msi) rule system $\logic{L}$ is complete with respect of some universal class of Heyting (resp.\ bi-Heyting, modal, tense, frontal Heyting) algebras. Moreover, if $\logic{L}$ is a logic, then (by \Cref{syntacticvarietiesuniclasses}) it is complete with respect to a variety of algebras of the appropriate kind. \label{completeness}
\end{corollary}


Lastly, we introduce some uniform notation to refer to the \emph{non truth-functional operations} of a $\nu$-algebra. For $\alg{A}$  a $\nu$-algebra, let 
\[\ops{\alg{A}}:=\begin{cases}
	\{\to \} &\text{if }\alg{A}\in \op{HA}\\
	\{\to, \from \} &\text{if }\alg{A}\in \op{biHA}\\
	\{\square \} &\text{if }\alg{A}\in \op{MA}\\
	\{\square, \pastlog \} &\text{if }\alg{A}\in \op{Ten}\\
	\{\to, \sisquare \} &\text{if }\alg{A}\in \op{FHA}
\end{cases}\]

	\subsection{Geometric Semantics and Duality} 
	
	All the rule systems mentioned so far also admit a more suggestive geometric-topological semantics, which we shall rely on in the proofs of several results. We sketch this semantics here and relate the basic topological structures it involves to their algebraic counterparts. 

	A \emph{Stone space} is a compact Hausdorff space with a basis of clopens. The topological structures we shall work with are all expansions of Stone spaces with one or more binary relations satisfying various conditions. For each of the signatures $\nu$ presented earlier, there is a corresponding class of such spaces, which for the moment we call $\nu$-\emph{spaces}. When $\spa{X}:=(X, \preceq_1, \ldots, \preceq_n, \class{O})$ is a $\nu$-space  we let $\clop{X}$ denote the set of clopen subsets of $\spa{X}$, and let $\op{ClopUp}_{\preceq_i}(\spa{X})$ denote the set of clopen upsets of $\spa{X}$ with respect to the relation $\preceq_i$, i.e., those elements of $\clop{X}$ which are upward-closed with respect to the relation $\preceq_i$. Moreover, for $U\subseteq X$ we write
	\begin{align*}
		\upset[\preceq_i]{U}&:=\{x\in X: y\preceq_i x\text{ for some }y\in U\},\\
		\downset[\preceq_i]{U}&:=\{x\in X: x\preceq_i y\text{ for some }y\in U\}.
	\end{align*}
	In case $U=\{x\}$ we write $\upset[\preceq_i]{x}$ and $\downset[\preceq_i]{x}$ instead of $\upset[\preceq_i]{\{x\}}$ and $\downset[\preceq_i]{\{x\}}$. 
	When the space in question is only equipped with one relation or when the relation in question is clear from context, we may omit the subscripts from any of these operations. 
	
	We now describe these spaces in more detail. An \emph{Esakia space} is a triple $\spa{X}=(X, \leq, \class{O})$ such that $(X, \class{O})$ is a Stone space and $\leq$ is a partial order satisfying the following conditions:
	\begin{enumerate}
		\item $\upset{x}$ is closed for every $x\in X$;\label{esa1}
		\item $\downset{U}\in \clop{X}$ for every $U\in \clop{X}$. \label{esa2}
	\end{enumerate}
	If, in addition, the structure $\spa{X}^{-1}=(X, \geq, \class{O})$ is also an Esakia space, where $\geq$ is the converse of $\leq$, then we call $\spa{X}$ a \emph{bi-Esakia space}. 

	A \emph{modal space} is a triple $\spa{X}=(X, R, \class{O})$ such that $(X, \class{O})$ is a Stone space and $R$ is a binary relation---not necessarily reflexive and transitive---satisfying conditions (\ref{esa1}) and (\ref{esa2}) above. When the structure $\spa{X}^{-1}=(X, \breve{R}, \class{O})$ is also a modal space, where $\breve{R}$ is the converse of $R$, we call $\spa{X}$ a \emph{tense space}. 
	
	Finally, a \emph{modalized Esakia space} is a quadruple $\spa{X}=(X, \leq, \sqsubseteq, \class{O})$ such that $(X, \leq,  \class{O})$ is an Esakia space and the following conditions hold:
	\begin{enumerate}
		\item $\{x\in X: \upset[\sqsubseteq]{x}\subseteq U\}\in \clopup[\leq]{X}$ whenever $U\in \clopup[\leq]{X}$; 
		\item The reflexive closure of $\sqsubseteq$ coincides with $\leq$. 
	\end{enumerate}

	Let $\spa{X}=(X, \preceq_1, \ldots, \preceq_n, \class{O})$ and  $\spa{X}'=(X', \preceq'_1, \ldots, \preceq'_n, \class{O}')$ be $\nu$-spaces. A mapping $f:\spa{X}\to \spa{X}'$ is called a \emph{$\nu$ bounded morphism} when it is continuous and satisfies the conditions below for all $x, y\in X$ and each $i\leq n$:
	\begin{enumerate}
		\item $x\preceq_i y$ only if $f(x)\preceq'_i f(y)$;
		\item $f(x)\preceq'_i f(y)$  only if there is $z\in f^{-1}(y)$ such that $x\preceq_i z$.
	\end{enumerate}
	In the special case where $\nu\in \{\mathit{bsi, ten}\}$, we must, in addition, require that the above conditions hold for the converses of $\preceq, \preceq'$. 

	We now describe how to interpret $\nu$-rule systems over $\nu$-spaces. Let $\spa{X}$ be a $\nu$-space. If  $\nu\in \{\mathit{si, bsi, msi}\}$, a \emph{$\nu$-valuation} on $\spa{X}$ is a mapping $V:\mathit{Frm}_\nu\to \clopup[\leq]{X}$ that commutes with the connectives in $\nu$ in the usual way. On the other hand, if $\nu\in \{\mathit{md}, \mathit{ten}\}$, a \emph{$\nu$-valuation} on $\spa{X}$ is defined in a similar way, except that we require $V$ to range over $\clop{X}$ instead of $\clopup{X}$. We list below how valuations commute with the most important connectives. 
	\begin{align}
		V(\varphi\to \psi)&=-\downset[\leq]{(V(\varphi)\smallsetminus V(\psi))},\\
		V(\varphi\from \psi)&=\upset[\leq]{(V(\varphi)\smallsetminus V(\psi))},\\
		V(\sisquare \varphi)&=\{x\in X: \upset[\sqsubseteq]{x}\subseteq V(\varphi)\}, \\
		V(\square \varphi)&=\{x\in X: \upset[R]{x}\subseteq V(\varphi)\}.\\
		V(\pastlog \varphi)&=\upset[R]{V(\varphi)}.
	\end{align}
	Here and throughout the paper, we use $-$ and $\smallsetminus$ to denote, respectively, the set-theoretic relative complement and difference operations. 

	Let $\spa{X}$ be a $\nu$-space and $V$ a valuation on it. A formula $\varphi$ is \emph{satisfied} on a model $(\spa{X}, V)$ at a point $x$ if $x\in V(\varphi)$. In this case we write $\spa{X}, V, x\vDash \varphi$, otherwise we write $\spa{X}, V, x\nvDash \varphi$ and say that the model $(\spa{X}, V)$ \emph{refutes} $\varphi$ at a point $x$. A rule $\Gamma/\Delta$ is \emph{valid} on a model $(\spa{X}, V)$ when the following holds: if $\spa{X}, V, x\vDash \gamma$ holds for each $x\in X$ and every $\gamma\in \Gamma$, then there is some $\delta\in \Delta$ such that   $\spa{X}, V, x\vDash \delta$ holds for each $x\in X$. In this case we write $\spa{X}, V\vDash \Gamma/\Delta$, otherwise we write $\spa{X}, V\nvDash \Gamma/\Delta$ and say that the model $(\spa{X}, V)$ \emph{refutes} $\varphi$. A rule $\Gamma/\Delta$ is \emph{valid} on a $\nu$-space $\spa{X}$ if it is valid on the model $(\spa{X}, V)$ for every valuation $V$ on $\spa{X}$, otherwise $\spa{X}$ \emph{refutes} $\Gamma/\Delta$. We write $\spa{X}\vDash \Gamma/\Delta$ to mean that $\Gamma/\Delta$ is valid on $\spa{X}$, and $\spa{X}\nvDash\Gamma/\Delta$ to mean that $\spa{X}$ refutes $\Gamma/\Delta$.  The notion of validity generalizes to classes of $\nu$-spaces, as well as to classes of rules, in the obvious way.


	For each of the signatures $\nu$ we shall work with, there is a duality result connecting $\nu$-algebras to $\nu$-spaces. All these dualities are generalizations of Stone duality, which relates the category of Boolean algebras with homomorphisms to that of Stone spaces with continuous functions \cite{Johnstone1982SS}. We list these dualities in the following theorem. 
	\begin{theorem}
		The category of modal (resp.\ Heyting, tense, bi-Heyting, frontal Heyting) algebras with homomorphisms is dually equivalent to the category of modal (resp.\ Esakia, tense, bi-Esakia, modalised Esakia) spaces with bounded morphisms. \label{topodual}
	\end{theorem}
	In each of these cases, we write $\dualspa{A}$ for the space dual to an algebra $\alg{A}$ and 
	$\dualalg{X}$ for the algebra dual to a space $\spa{X}$. The space $\dualspa{\alg{A}}$ is always an expansion of the space of prime filters of $\alg{A}$. We write $\beta$ for the map, called the \emph{Stone map}, which takes element $a$ and returns the set $\beta(a)$ of prime filters in that algebras that contain $a$. In the other direction, the algebra $\dualalg{\spa{X}}$ is constructed by taking clopen sets (if $\nu\in \{\mathit{md, ten}\}$) or clopen upsets (if $\nu\in \{\mathit{si, bsi, msi}\}$). We refer the reader to \cite{Esakia1974TKM,SambinVaccaro1988TaDiML,Esakia1975TPoDitILaBL,CastiglioniEtAl2010OFHA} for detailed descriptions and proofs of these dualities. 

	\subsection{Kripke semantics}
	Besides spaces, in \Cref{sec:scr,sec:mc} we shall also work with Kripke frames. We will only use Kripke frames to interpret si, bsi, modal and tense rule systems. Thus we define a \emph{Kripke frame} to be a set $\spa{X}:=(X, \preceq)$, where $X$ is a non-empty set and $\preceq$ a binary relation on $X$. An \emph{intuitionistic Kripke frame} is a Kripke frame $\spa{X}:=(X, \leq)$, where $\leq$ is a partial order. The notions of \emph{$\nu$ bounded morphism} for $\nu\in \{\mathit{md, ten, si, bsi}\}$ are defined the same way as for spaces, but omitting the requirement of continuity. 

	For $\nu\in \{\mathit{md}, \mathit{ten}\}$, a \emph{$\nu$-valuation} on a Kripke frame $\spa{X}$ is a mapping $V:\mathit{Frm}_\nu\to \wp(X)$ that commutes with the connectives in $\nu$ in the usual way. For $\nu\in \{\mathit{si, bsi}\}$, \emph{$\nu$-valuation} on an intuitionistic Kripke frame $\spa{X}$ is a mapping $V:\mathit{Frm}_\nu\to \wp(X)$ that commutes with the connectives in $\nu$ in the usual way, such that $\upset[]{V}(\varphi)=V(\varphi)$ for every $\varphi\in\mathit{Frm}_\nu$. We extend our notions of satisfaction and validity from spaces to Kripke frames in the obvious way. 

	We recall briefly the following duality results concerning Kripke frames, which were first proved, respectively, in \cite{Thomason1975CoFfML} and \cite{Jongh1966OtCoPOSwSPBA} (see also \cite{Litak2005AAAtIiML}).
	\begin{theorem}
		The following categories are dually equivalent. \label{kripkedual}
		\begin{enumerate}
			\item Kripke frames with modal (resp.\ tense) bounded morphisms and complete, atomic, completely additive and completely distributive modal (resp.\ tense) algebras with complete homomorphisms.
			\item Intuitionistic Kripke frames with si (resp.\ bsi) bounded morphisms and complete, completely distributive and completely join prime generated Heyting (resp.\ bi-Heyting) algebras with complete homomorphisms. \label{kripkedual-h}
		\end{enumerate}
	\end{theorem}
	We write $\dualkr{A}$ for the Kripke frame dual to an algebra $\alg{A}$ among those mentioned in the Theorem above, and 
	$\dualcalg{X}$ for the algebra dual to a Kripke frame $\spa{X}$. The signature of $\dualcalg{X}$ will be clear from context. The Kripke frame $\dualkr{A}$ is constructed by expanding the set of \emph{completely join prime} filters of $\alg{A}$ with a binary relation, which is defined the same way as in the dualities from \Cref{topodual}. The algebra $\dualcalg{X}$ is constructed the same way as $\dualalg{Y}$ when $\spa{Y}$ is a space, but taking subsets (resp.\ upsets) instead of clopen subsets (resp.\ clopen upsets).

	\begin{convention}
		Before moving on, we introduce a notational convention we shall use throughout the paper to discuss related rule systems and structures while avoiding cumbersome repetitions. The convention consists in the use of parentheticals in expressions naming rule systems, mathematical structures and classes thereof. For example, we will use the expression `$\logic{S4(.t)}$' to refer simultaneously to the rule systems $\logic{S4}$ and $\logic{S4.t}$. Similarly, we will use the expression `(bi-)Heyting algebras' to refer simultaneously to Heyting and bi-Heyting algebras. 

		We use these parentheticals in the same way parentheticals of the form ``(resp.\ \ldots)'' are normally used.  To illustrate, \Cref{kripkedual-h} in \Cref{kripkedual} can be rewritten, using the convention just introduced, in the following way: 
		\begin{quote}
			The following categories are dually equivalent: intuitionistic Kripke frames with (b)si bounded morphisms and complete, completely join prime generated (bi-)Heyting algebras with complete homomorphisms.
		\end{quote}
	\end{convention}
	 
	

	\subsection{Transitive structures}

	We close our preliminaries by reviewing some classes of transitive structures we shall encounter throughout the paper. Let us first introduce some more notational conventions. We refer to an algebra in $\op{Alg}(\logic{S})$ as an $\logic{S}$-algebra. Similarly, we let an $\logic{S}$-space be a space in $\op{Spa}(\logic{S})$, and an $\logic{S}$-frame be a Kripke frame that validates every rule in $\logic{S}$---with the additional requirement that an  $\logic{S}$-frame be \emph{intuitionistic} when $\logic{S}$ is a (b)si logic.

	We recall that the $\logic{K4(.t)}$-spaces can be characterized as those modal (resp.\ tense) spaces with a transitive relation, and that the $\logic{S4(.t)}$-spaces coincide with those $\logic{K4(.t)}$-spaces where the relation is, in addition, reflexive. We recall some well known properties of these spaces. Given a preordered set $(X, R)$, we define:
	\begin{align*}
		\mathit{qmax}_{R}(U)&:=\{x\in U: \text{for all $y\in U$, if $Rxy$, then $Ryx$}\}\\
		\mathit{max}_{R}(U)&:=\{x\in U: \upset[R]{x}\subseteq\{x\}\}\\
		\mathit{qmin}_{R}(U)&:=\{x\in U: \text{for all $y\in U$, if $Ryx$, then $Rxy$}\}\\ 
		\mathit{min}_{R}(U)&:=\{x\in U:\downset[R]{x}\subseteq\{x\}\}
	\end{align*}
	We omit subscripts when they can be inferred from context. 
	\begin{proposition}
		Let $\spa{X}$ be a $\logic{K4}$-space. Then the following conditions hold for every $x\in X$ and each $U\in \clop{X}$. \label{propk4}
		\begin{enumerate}
			\item $\mathit{qmax}(U)$ is closed. \label{maxclosed}
			\item If $x\in U$, then either $x\in \mathit{max}(U)$ or there is $y\in \mathit{qmax}(U)$ such that $Rxy$. \label{seemax}
			\item When $\spa{X}$ is a $\logic{S4}$-space, \Cref{seemax} can be strengthened to the following: if $x\in U$, then there is $y\in \mathit{qmax}(U)$ such that $Rxy$. \label{seemaxs4}
			\item When $\spa{X}$ is a $\logic{S4.t}$-space, \Cref{maxclosed,seemaxs4} remain true if we substitute  $\mathit{qmax}(U)$ for  $\mathit{qmin}(U)$ and $Rxy$ for $Ryx$. 
		\end{enumerate}
	\end{proposition}

	Among $\logic{S4(.t)}$-spaces, we shall pay particular attention to $\logic{Grz(.t)}$-spaces. We recall some of their basic properties. Given a preordered set $(X, R)$ and $U\subseteq X$, we call an element $x\in U$ \emph{passive in $U$} when there is no $y\in X\smallsetminus U$ such that $Rxy$ and $Ryz$ for some $z\in U$. In other words, $x$ is passive in $U$ when one cannot ``leave'' and ``re-enter'' $U$ starting from $x$. A \emph{cluster} in $(X, R)$ is a set $C\subseteq X$ which is maximal with the property that $Rxy$ and $Ryx$ whenever $x, y\in C$. A set $U\subseteq{X}$ is said to \emph{cut} a cluster $C\subseteq X$ when neither $C\subseteq U$ nor $C\cap U=\varnothing$ hold. 
	\begin{theorem}[{\cite[Thm. 3.5.5]{Esakia2019HADT}}]
		Let $\spa{X}$ be an $\logic{S4(.t)}$-space. Then $\spa{X}$ is a  $\logic{Grz(.t)}$-space if and only if for every $U\in \clop{X}$ and any $x\in U$, there is a $y\in U$ such that $Rxy$ and $y$ is passive in $U$ $($and there is some $z\in U$ such that $Rzx$ and $z$ is passive in $U$ with respect to the converse of $R)$. \label{grz-charact}
	\end{theorem}
	\begin{corollary}[{\cite[Thm.\ 3.5.6]{Esakia2019HADT}}]
		Let $\spa{X}$ be a $\logic{Grz}$-spaces and $U\in \clop{X}$. The following conditions hold: \label{grz-max}
		\begin{enumerate}
			\item $\mathit{qmax}(U)=\mathit{max}(U)$.
			\item $\mathit{max}(U)$ does not cut any cluster. 
		\end{enumerate}
		Moreover, if $\spa{X}$ is also a $\logic{Grz.t}$-space, the conditions above continue to hold when we substitute $\mathit{qmin}(U)$ for $\mathit{qmax}(U)$ and $\mathit{min}(U)$ for $\mathit{max}(U)$.  
	\end{corollary}
	\begin{corollary}[{\cite[Thm.\ 3.5.8]{Esakia2019HADT}}]
		Let $\spa{X}$ be a $\logic{S4(.t)}$-space. If $\spa{X}$ is partially ordered, then $\spa{X}$ is a $\logic{Grz(.t)}$-space. \label{grz-skeletal}
	\end{corollary}

	We mention another simple fact concerning clusters which we shall appeal to several times. 
	\begin{proposition}
		Let $\spa{X}, \spa{Y}$ be $\logic{S4(.t)}$-spaces or Kripke frames and let $f:\spa{X}\to \spa{Y}$ be an order-preserving map. Then $f^{-1}(U)$ does not cut clusters for any $U\subseteq{Y}$. \label{cutclusters}
	\end{proposition}

	Another class of $\logic{K4}$-spaces we shall pay close attention to is the class of $\logic{GL}$-spaces. These spaces display various similarities with $\logic{Grz}$-spaces, as the reader can appreciate by comparing the following results with \Cref{propk4,grz-charact,grz-max}.

    \begin{theorem}
        Let $\spa{X}$ be a $\logic{GL}$-space. Then $\spa{X}$ is a $\logic{GL}$-space if and only if for every $U\in \clop{X}$ and any $x\in X$, if $\upset{x}\cap U\neq \varnothing$, then there is some $y\in U$ such that $Rxy$ and $\upset{y}\cap U=\varnothing$.
    \end{theorem}
	\begin{corollary}
		Let $\spa{X}$ be a $\logic{GL}$-space and $U\in \clop{X}$. The following conditions hold: \label{propgl}
		\begin{enumerate}
			\item $\mathit{max}(U)=\{x\in U: \upset{x}\cap U=\varnothing\}$;
			\item $\mathit{max}(U)\in \clop{X}$;
			\item If $x\in U$, then either $x\in \mathit{max}(U)$ or there is $y\in \mathit{max}(U)$ such that $Rxy$.
		\end{enumerate}
	\end{corollary}
    \begin{corollary}
        Let $\spa{X}$ be a $\logic{K4}$-space. If $\spa{X}$ has an irreflexive relation, then $\spa{X}$ is a $\logic{GL}$-space. \label{gl-skeletal}
    \end{corollary}

    \section{Mappings and Translations}
    \label{sec:mappings}

	The main results discussed in this paper all involve translations between rules in different signatures, and semantic transformations corresponding to them. The purpose of this section is to introduce these translations and transformations. 

 \begin{convention}
     To treat these mappings uniformly, we introduce some notational conventions to refer to the three pairs of signatures which we want to connect via translations. We let the numerals $\simod$, $\bsiten$ and $\msimod$ denote, respectively, the pairs of signatures $(\mathit{si}, \mathit{mod})$, $(\mathit{bsi}, \mathit{ten})$ and $(\mathit{msi}, \mathit{mod})$. When $\sig$ is any of these pairs of signatures, the signature occurring in the first coordinate of $\sig$ is called the \emph{intuitionistic} signature, whereas the signature occurring in the second coordinate of $\sig$ is called the \emph{classical} signature. \label{signatureconvention}
 \end{convention}

	For each pair of signatures $\sig\in \{\simod, \bsiten, \msimod\}$ we will define a translation function $T_\sig$, as well as algebraic, topological and syntactic versions of three mappings, $\greatest[\sig], \least[\sig]$ and $\fragment[\sig]$. In the case of signature pairs $\simod$ and $\bsiten$ we will also define versions of the maps $\greatest[\sig]$ and $\fragment[\sig]$ on Kripke frames. We will adopt the further convention of suppressing subscripts for signature pairs when they can be inferred from context. 

	We defined distinguished rule systems and universal classes of algebras for each pair of signatures $\sig\in \{\simod, \bsiten, \msimod\}$, as follows:
	\begin{gather}
		\logic{I}_\sig:=\begin{cases}
			\logic{IPC}& \text{if $\sig=\simod$}\\
			\logic{biIPC}&\text{if $\sig=\bsiten$}\\
			\logic{KM}&\text{if $\sig=\msimod$}
		\end{cases}\quad
		\logic{C}_\sig:=\begin{cases}
			\logic{S4}& \text{if $\sig=\simod$}\\
			\logic{S4.t}&\text{if $\sig=\bsiten$}\\
			\logic{K4}&\text{if $\sig=\msimod$}
		\end{cases}\quad
		\logic{C}^+_\sig:=\begin{cases}
			\logic{Grz}& \text{if $\sig=\simod$}\\
			\logic{Grz.t}&\text{if $\sig=\bsiten$}\\
			\logic{GL}&\text{if $\sig=\msimod$}
		\end{cases}
		\label{baselogics}\\
	\class{I}_\sig:=\op{Alg}(\logic{I}_\sig)\quad\class{C}_\sig:=\op{Alg}(\logic{C}_\sig)\quad \class{C}^+_\sig:=\op{Alg}(\logic{C}^+_\sig). \label{basealgebras}
	\end{gather}
	We shall use this notation to state definitions and results concerning the mappings mentioned above in a uniform fashion. 

    \subsection{Mappings on Algebras}

    We begin by reviewing some well-known semantic transformations between algebras.
    Recall that the \emph{free Boolean extension} of a Heyting algebra $\alg{H}$ is the unique Boolean algebra $B(\alg{H})$ in which $\alg{H}$ embeds as a distributive lattice, such that the image of $\alg{H}$ under this embedding generates $B(\alg{H})$ as a Boolean algebra [\citealp[Def.\ 2.5.6, Constr.\ 2.5.7]{Esakia2019HADT}; \citealp[Sec.\ V]{BalbesDwinger1975DL}]. For simplicity, we will generally identify $\alg{H}$ with its image in $B(\alg{H})$. Note this convention is used in the definitions to follow.

    If $\alg{H}$ is a Heyting algebra, the algebra $\greatest[\simod]\alg{H}$ is constructing by expanding  $B(\alg{H})$   with the operation 
	\[\square a:=\bigvee \{b\in H: b\leq a\}.\]
	If $\alg{M}$ is a bi-Heyting algebra, we define $\greatest[\bsiten]\alg{H}$ the same way but also add the operation 
	\[\pastlog a:=\bigwedge \{b\in H: a\leq b\}.\]
    Finally, if $\alg{M}$ is a frontal Heyting algebra, we define $\greatest[\msimod]\alg{H}$ by expanding $B(\alg{H})$ with the operation
    \[
    \square a:=\boxtimes Ia,
    \]
    where 
    \[
    Ia:=\bigvee\{b\in H:b\leq a\}.
    \]
	It is  known that $\greatest(\alg{H})$ is an $\logic{Grz(.t)}$-algebra  whenever $\alg{H}$ is a (bi-)Heyting algebra [\citealp[Cor. 3.5.9]{Esakia2019HADT}; \citealp[Lem. 16]{Wolter1998OLwC}], and moreover that $\greatest[\msimod](\alg{H})$ is a $\logic{GL}$-algebra whenever $\alg{H}$ is a frontal Heyting algebra \cite[Cor. 20]{Esakia2006TMHCaCMEotIL}. 

	Conversely, if $\alg{M}$ is an $\logic{S4}$-algebra,  the algebra $\fragment[\simod] \alg{M}$ is constructed as follows. As the carrier we take the bounded lattice $O(\alg{M})$ of open elements of $\alg{M}$, that is, of those elements $a\in M$ with $\square a=a$, or, equivalently, $\pastlog a=a$. We expand this lattice with the operation
	\[a\to b:=\square (\neg a\lor b).\]
	When $\alg{M}$ is an $\logic{S4.t}$-algebra, we define $\fragment[\bsiten] \alg{M}$ the same way but also add the operation 
	\[a\from b:=\pastlog ( a\land \neg b).\]
    Likewise, if $\alg{M}$ is a $\logic{K4}$-algebra, the algebra $\fragment[\msimod]\alg{M}$ is constructed as follows. As the carrier we take the bounded lattice $O^+(\alg{M})$ of \emph{quasi}-open elements of $\alg{M}$, i.e., those elements of $\alg{M}$ with $\square^+a=a$, where $\square^+a:=\square a\land a$. We then expand this lattice with the following operations:
    \begin{align*}
        a\to b&:=\square^+ (\neg a\lor b)\\
        \boxtimes a&:= \square a,
    \end{align*}
    It is well known that $\fragment\alg{M}$ is a (bi-)Heyting algebra for every $\logic{S4(.t)}$-algebra $\alg{M}$, and that $\fragment[\msimod]\alg{M}$ is a frontal Heyting algebra for every $\logic{K4}$-algebra $\alg{M}$. 

    All these mappings can be lifted to universal classes. Given $\sig\in \{\simod, \bsiten, \msimod\}$, let $\class{U, V}$ be universal classes of algebras on which, respectively, $\greatest[\sig]$ and $\fragment[\sig]$ are defined. We then put 
	\[\greatest[\sig] \class{U}:=\op{Uni}\{\greatest[\sig] \alg{H}:\alg{H}\in \class{U}\}\qquad \fragment[\sig] \class{V}:= \{\fragment[\sig]\alg{A}:\alg{A}\in \class{V}\}.\]
	We also introduce mappings $\least[\sig]:\op{Uni}(\class{I}_\sig)\to \op{Uni}(\class{C}_\sig)$  by setting 
	\[\least[\sig] \class{W}:=\{\alg{M}\in\class{C} :\fragment[\sig]\alg{M}\in \class{W}\}. \]

    \subsection{Mappings on Spaces}

    We now describe the maps defined in the previous subsection dually. If $\spa{X}$ is an Esakia space, we set 
	\[\greatest[\simod] \spa{X}:=(X, R, \mathcal{O})\qquad\qquad  R:= {\leq}.\] 
	When $\spa{X}$ is a bi-Esakia space we let $\greatest[\bsiten]\spa{X}:=\greatest[\simod]\spa{X}$. 
	Thus $\greatest[\simod]$ and $\greatest[\bsiten]$ are just  identity maps; we simply notate the relation differently to indicate that we are viewing the structure as a modal or tense space. Moreover, if $\spa{X}$ is a modalized Esakia space, we let $\greatest[\msimod]\spa{X}$ be the $\leq$-free reduct of $\spa{X}$. By \Cref{grz-skeletal}, if $\spa{X}$ is a (bi-)Esakia space, then $\greatest\spa{X}$ is always a $\logic{Grz(.t)}$-space.  Likewise, by \Cref{gl-skeletal}, $\greatest[\msimod]\spa{X}$ is always a $\logic{GL}$-space whenever $\spa{X}$ is a modalized Esakia space. 
    
    Conversely, let  $\spa{Y}:=(Y, R, \mathcal{O})$ be a $\logic{K4}$-space. For $x, y\in Y$ write $x\sim y$ iff $Rxy$ and $Ryx$. Define a map $\varrho:Y\to \wp (Y)$ by setting $\varrho(x)=\{y\in Y:x\sim y\}$. We call this map the \emph{skeleton map}. 
	When $\spa{Y}$ is an $\logic{S4}$-space  we let $\fragment[\simod]\spa{Y}:=(\varrho[Y], \leq, \varrho[\class{O}])$ where $\varrho(x)\leq \varrho(y)$ iff $Rxy$. We let $\fragment[\bsiten]$ be the restriction of $\fragment[\simod]$ to $\logic{S4.t}$ spaces. When $\spa{Y}$ is a $\logic{K4}$-space we let $\fragment[\msimod]\spa{Y}:=(\varrho[Y], \leq, \sqsubseteq, \varrho[\class{O}])$, where $\varrho(x)\leq\varrho(y)$ iff $R^+xy$ and $\varrho(x)\sqsubseteq \varrho(y)$ iff $Rxy$. Here $R^+$ denotes the reflexive closure of $R$. 
    
    In other words, when $\spa{Y}$ is a $\logic{S4}$-space, the space $\fragment[\simod]\spa{Y}$ is obtained by collapsing the clusters of $\spa{Y}$, lifting the relation $R$ clusterwise and endowing the result with the quotient topology under the mapping $\varrho$. When $\spa{Y}$ is a $\logic{K4}$-space, $\fragment[\msimod]\spa{Y}$ is constructed in a similar way, except that the intuitionistic relation of $\fragment[\msimod]\spa{Y}$ is obtained by lifting the reflexive closure of $R$ clusterwise, and the modal relation  is obtained by lifting $R$ itself clusterwise. 

    We note a simple property of the transformations $\fragment[\sig]$, which we shall appeal to later on. 
    \begin{proposition}
        Let $\spa{X}$ be a $\logic{K4}$-space. If $U\subseteq X$ is open (resp.\ closed), then $\varrho[U]$ is open (resp.\ closed) in $\fragment[\msimod]\spa{X}$. Moreover, the same holds when $\spa{X}$ is an $\logic{S4}$-space and $\fragment[\msimod]$ is replaced with $\fragment[\simod]$. \label{rhoproperties}
    \end{proposition}

    Routine arguments show that the transformations just defined are indeed dual to their algebraic counterparts defined in the previous subsection \cite[see, e.g.,][Prop.\ 3.4.15]{Esakia2019HADT}. This is to say that given $\sig\in \{\simod, \bsiten, \msimod\}$, for any algebras $\alg{H, M}$ on which the algebraic maps  $\greatest[\sig], \fragment[\sig]$ are defined we have 
	$(\greatest[\sig] \alg{H})_*\cong\greatest[\sig] \alg{H}_*$ and $(\fragment[\sig] \alg{M})_*\cong\fragment[\sig] \alg{M}_*$. 
	Consequently, for any spaces $\spa{X, Y}$ on which the geometric maps $\greatest[\sig], \fragment[\sig]$ are defined we have 
	$(\greatest[\sig] \spa{X})^*\cong\greatest[\sig] \spa{X}^*$ and  $(\fragment[\sig] \spa{Y})^*\cong\fragment[\sig] \spa{Y}^*$. 

    By appealing to these dualities, the following Proposition easily follows. 
    \begin{proposition}
		Given $\sig\in \{\simod, \bsiten, \msimod\}$, let $\alg{H}\in \class{I}_\sig$ and $\alg{M}\in \class{C}_\sig$. Then $\alg{H}\cong \fragment[\sig]\greatest[\sig]\alg{H}$ and $\greatest[\sig]\fragment[\sig]\alg{M}$ is (isomorphic to) a subalgebra of $\alg{M}$.  \label{cor:representationHAS4}
    \end{proposition}

    \subsection{Mappings on Kripke Frames}

    For $\sig\in \{\simod, \bsiten\}$, we also define versions of the maps $\greatest[\sig], \fragment[\sig]$ on Kripke frames. When $\spa{X}$ is an intuitionistic Kripke frame, we let $\greatest[\sig] \spa{X}=\spa{X}$. Conversely, if $\spa{X}$ is a Kripke frame with a reflexive and transitive relation, we let $\fragment[s]\spa{X}$ be defined as we did above for spaces, but omitting the conditions concerning topology. Since the maps are defined the same way for the two pairs of signatures $\simod$ and $\bsiten$, we will always omit signature subscripts when working with Kripke frames. We do not define counterparts of the maps $\greatest[\msimod], \fragment[\msimod]$ on Kripke frames. 

	The $\fragment$ transformation on Kripke frames corresponds quite closely with its topological version. In particular, for every Kripke frame $\spa{F}$ on which the mapping $\fragment$ is defined we have $\dualcalg{(\fragment\spa{F})}\cong \fragment\dualcalg{F}$, and so for every perfect $\logic{S4(.t)}$-algebra $\alg{M}$ we have $\dualkr{(\fragment\spa{M})}\approx \fragment\dualkr{M}$. On the other hand, the algebraic version of the map $\greatest$ fails to preserve atomicity \citep[p. 103]{WolterZakharyaschev2014OtBET}, so in general the identity $\dualcalg{(\greatest F)}\cong \greatest\dualcalg{F}$ may fail. Observe further that when $\spa{F}$ is an intuitionistic Kripke frame, $\greatest\spa{F}$ is not guaranteed to be a Kripke frame for $\logic{Grz(.t)}$. As is well known, the Kripke frames for $\logic{Grz(.t)}$ are precisely those partially ordered Kripke frames which are conversely well founded (resp.\ well-founded and conversely well founded). However, intuitionistic Kripke frames need not be well founded nor conversely well founded. 


    \subsection{Translations}
	\label{sec:trans}

    All the mappings we have introduced are semantic counterparts to various translations between formulas in different signatures. These translations are all versions of the \emph{Gödel Translation} \citep{Goedel1933EIDIA}. For present purposes, we shall define the Gödel translation as a mapping $\transl[\simod]:\mathit{Frm}_{\mathit{si}}\to \mathit{Frm}_{\mathit{md}}$ defined recursively as follows. 
	\begin{align*}
		\transl[\simod](\bot)&:=\bot   	&\transl[\simod](\varphi\lor \psi)&:=\transl[\simod](\varphi)\lor \transl[\simod](\psi)\\
		\transl[\simod](\top)&:=\top 		&\transl[\simod](\varphi\to \psi)&:=\square (\neg \transl[\simod](\varphi)\lor \transl[\simod](\psi))\\
		\transl[\simod](p)&:=\square p 	&\transl[\simod](\varphi\land \psi)&:=\transl[\simod](\varphi)\land \transl[\simod](\psi).
	\end{align*}
    We extend this translation to define two more mappings, $\transl[\bsiten]:\mathit{Frm}_{\mathit{bsi}}\to \mathit{Frm}_{\mathit{ten}}$ and $\transl[\msimod]:\mathit{Frm}_{\mathit{msi}}\to \mathit{Frm}_{\mathit{md}}$. These mappings are obtained by extending the definition above with one of the following additional conditions \citep[cf.][]{Wolter1998FoMLR,KuznetsovMuravitsky1986OSLAFoPLE}:
    \[\transl[\bsiten](\varphi\from \psi):=\pastlog (\transl[\bsiten](\varphi)\land \neg \transl[\bsiten](\psi))\qquad \transl[\msimod](\boxtimes \varphi):=\square \transl[\msimod](\varphi).\]
    $\transl[\bsiten]$ was introduced in \cite{Wolter1998FoMLR}, whereas $\transl[\msimod]$ is equivalent to the translation introduced in \cite{KuznetsovMuravitsky1986OSLAFoPLE} (see also \cite{WolterZakharyaschev1997IMLAFoCBL,WolterZakharyaschev1997OtRbIaCML}).
	We extend these mappings from formulae to rules by setting
	\[\transl[\sig](\Gamma/\Delta):=\transl[\sig][\Gamma]/\transl[\sig][\Delta].\]
    We will refer to all of these mappings as ``Gödel Translations.''  
    
    The interactions between the Gödel translations and the semantic mappings previously introduced are described in the next Lemma. 
    \begin{lemma}[cf. {\cite[Lemma 3.13]{Jerabek2009CR}}]
	Let $\sig\in \{\simod, \bsiten, \msimod\}$ and let $\alg{M}$ be an algebra on which $\fragment[\sig]$ is defined. Then for every rule in the intuitionistic signature of $\sig$, we have \label{lem:gtskeleton}
	\[\alg{M}\models \transl[\sig](\Gamma/\Delta)\iff \fragment[\sig]\alg{M}\models \Gamma/\Delta.\]
    \end{lemma}
    
    Let us now define mappings between logics in different signature by means of the Gödel translations. For $\sig\in \{\simod, \bsiten, \msimod\}$, let $\logic{L}\in \lat{Ext}(\logic{I}_\sig)$. We define:
	\[\least \logic{L}:=\logic{C}_\sig\oplus \{\transl[\sig](\Gamma/\Delta):\Gamma/\Delta\in \logic{L}\} \qquad  \greatest \logic{L}:=\logic{C}^+_\sig\oplus \least[\sig] \logic{L}.\]
    Conversely, if $\logic{M}\in \lat{NExt}(\logic{C}_\sig)$, we put
	\[\fragment[\sig]\logic{M}:=\logic{I}_\sig\oplus \{\Gamma/\Delta: T(\Gamma/\Delta)\in \logic{M}\}.\]

    Finally, let $\logic{L}\in \lat{Ext}(\logic{I}_\sig)$ and $\logic{M}\in \lat{NExt}(\logic{M}_\sig)$. We say that $\logic{M}$ is a \emph{companion} of $\logic{L}$ when $\fragment[\sig]\logic{M}=\logic{L}$. We call the companions of si and msi rule systems \emph{modal companions}, and the companions of bsi rule systems \emph{tense companions}.



	\section{Stable Canonical Rules}
	\label{sec:scr}
	In this section we introduce stable canonical rules for si, bsi, modal and tense rule systems. Essentially, stable canonical rules are syntactic devices for encoding finite filtrations. Although the results of this sections are only discussed in print for the si and modal case, their generalizations to the bsi and tense case are straightforward. We point the reader to \cite{BezhanishviliEtAl2016SCR,BezhanishviliEtAl2016CSL,BezhanishviliBezhanishvili2017LFRoHAaCF,Bezhanishvili2023JFaATfIL} and \cite[Ch. 5]{Ilin2018FRLoSNCL} for more in-depth discussion. 

    We are not going to define stable canonical rules for msi rule systems. This is because the main result we are interested in with respect to msi rule systems is the Kuznetsov-Muravitsky isomorphism, which can be proved using only modal stable canonical rules. We comment on how stable canonical rules for msi rule systems might be developed in \Cref{nomsirule}.


	Since, in this section, we shall not deal with frontal Heyting algebras and their duals, unless otherwise specified we use the term \emph{algebra} to refer to something which is either a modal, tense, Heyting and bi-Heyting algebras without specifying which. We adopt analogous conventions for the terms \emph{space}, \emph{rule}, \emph{rule system}, and so on.

	We begin by defining stable canonical rules. Recall that when $\alg{A}$ is an algebra, by $\ops{\alg{A}}$ we denote the set of non truth-functional operations of $\alg{A}$.
	\begin{definition}
		Let $\alg{H}$ be a finite (bi-)Heyting  algebra and let $D:=(D^\heartsuit)_{\heartsuit \in \ops{\alg{H}}}$, where $D^\heartsuit\subseteq H\times H$. For every $a\in H$ introduce a fresh propositional variable $p_a$. The (si or bsi) \emph{stable canonical rule}  $\scrsi{H}{D}$ is defined as the rule $\Gamma/\Delta$, where
		\begin{align*}
			\Gamma=&\{p_0\leftrightarrow \bot\}\cup\{p_1\leftrightarrow \top\}\cup\\
			&\{p_{a\land b}\leftrightarrow p_a\land p_b:a, b\in H\}\cup \{p_{a\lor b}\leftrightarrow p_a\lor p_b:a, b\in H\}\cup\\
			&\bigcup_{\heartsuit\in \ops{\alg{H}}} \{p_{a\heartsuit b}\leftrightarrow p_a\heartsuit p_b:(a, b)\in D^\heartsuit\}\\
			\Delta=&\{p_a\leftrightarrow p_b:a, b\in H\text{ with } a\neq b\}.
		\end{align*}
	\end{definition}
	\begin{definition}
		Let $\alg{M}$ be a finite modal (resp.\ tense) algebra and let $ D:=(D^\heartsuit)_{\heartsuit \in \ops{\alg{M}}}$, where $D^\heartsuit\subseteq A$. For every $a\in A$ introduce a fresh propositional variable $p_a$. The modal (resp.\ tense) \emph{stable canonical rule}  $\scrmod{A}{D}$ is defined as the rule $\Gamma/\Delta$, where
		\begin{align*}
			\Gamma=&\{p_0\leftrightarrow \bot\}\cup\{p_1\leftrightarrow \top\}\cup\\
			&\{p_{a\land b}\leftrightarrow p_a\land p_b:a, b\in A\}\cup \{p_{a\lor b}\leftrightarrow p_a\lor p_b:a, b\in A\}\cup\\
			&\{p_{\square a}\to \square p_a:a\in A\} \cup \left(\{\pastlog p_a\to p_{\pastlog a}:a\in A\}\cup \right)
			\\
			&\bigcup_{\heartsuit\in \ops{\alg{H}}} \{p_{\heartsuit a}\leftrightarrow\heartsuit p_a:a\in D^\heartsuit\}\\
			\Delta=&\{p_a:a\in A\smallsetminus 1\}.
		\end{align*}
		The parenthetical $\left(\{\pastlog p_a\to p_{\pastlog a}:a\in A\}\cup \right)$, recall, indicates that the formulae $\pastlog p_a\to p_{\pastlog a}$ are to be added to $\Gamma$ only when $\alg{M}$ is a tense algebra.\footnote{Had we taken $\pastsquare$ instead of $\pastlog$ as primitive, we could have given a less disjunctive definition of a tense stable canonical rule. However, the present definition affords a simpler method for transforming tense stable canonical rules based on $\logic{S4}$-algebras into corresponding bsi stable canonical rules.}
	\end{definition}
	We will use the notiation $\scr{A}{D}$ to refer to a stable canonical rule without specifying whether it is modal, tense, si or bsi. 

    \begin{remark}
        To keep the paper relatively short, we decided not to include stable canonical rules for msi rule systems. We can indicate here two ways these might be developed. One straightforward approach is to simply combine modal and si stable canonical rules, requiring partial preservation of $\boxtimes$. This approach is developed in \cite{Liao2023SCRfIML}. Another approach, pursued in 
        \cite{Cleani2021TEVSCR}, is to introduce rules which code up a more general notion of filtration. In the msi setting, this notion of filtration is motivated by the fact that any finite distributive lattice admits a unique expansion to a 
        $\logic{KM}$-algebra. In the modal setting, it is obtained by lifting the requirement that $\square$ be preserved in one direction from the definition of standard filtration. The main reason to work with this more general notion of filtration is that $\logic{KM}$ and $\logic{GL}$ admit filtration in this more general sense, but not in the standard sense. 

         Either version of stable canonical rules for msi rule systems can be used to generalize our proof of the Dummett-Lemmon conjecture, to be presented in \Cref{sec:dummettlemmon}, to the pair of signatures $\msimod$.  \label{nomsirule} \citet{Liao2023SCRfIML} was able to use our technique to prove a Dummett-Lemmon conjecture for rule systems which are quite similar to what we call msi rule systems, but where the intuitionistic modality satisfies at least the axioms of $\logic{S4}$. It does not seem, however, that anything of substance rests on the assumption of the $\logic{T}$ axiom. 
    \end{remark}

	If $\alg{H}, \alg{K}$ are (bi-)Heyting algebras, we call a map $h:\alg{H}\to \alg{K}$ \emph{stable} when $h$ is a bounded lattice homomorphism. Given $\heartsuit\in \{\to , \from\}$ and  $D^\heartsuit\subseteq H\times H$, we say that $h$ satisfies the $\heartsuit$-\emph{bounded domain condition}\footnote{The BDC$^\heartsuit$ was originally called \emph{closed domain condition} in, e.g., \cite{BezhanishviliEtAl2016SCR,BezhanishviliBezhanishvili2017LFRoHAaCF}, following \z's terminology for a similar notion in the theory of his canonical formulae.  The name \emph{stable domain condition} was later used in \cite{Bezhanishvili2023JFaATfIL} to stress the difference with \z's notion. However, this choice may create confusion between the BDC and the property of being a stable map. The terminology used in this paper is meant to avoid this, while concurrently highlighting the similarity between the geometric version of the BDC, to be presented in a few paragraphs, and the definition of a bounded morphism.} (BDC$^\heartsuit$) for $D^\heartsuit$ if 
	\[h(a\heartsuit b)=h(a)\heartsuit h(b)\] 
	for every $(a, b)\in D^\heartsuit$.
	It is not difficult to check that every stable map $h:\alg{H}\to \alg{K}$ satisfies $h(a\to b)\leq h(a)\to h(b)$ for every $(a, b)\in H$. If $\alg{H}\in \op{biHA}$, we also have $h(a\from b)\geq h(a)\from h(b)$ for every $(a, b)\in H$.

	Similarly, if $\alg{M, N}$ are modal (resp.\ tense) algebras, we call a map $h:\alg{M}\to \alg{N}$ \emph{stable} when $h$ is a Boolean algebra homomorphism which, moreover, satisfies
	\begin{equation*}
		h(\square a)\leq \square h(a) \qquad
		\left( \pastlog h(a)\leq h(\pastlog a) \right)
	\end{equation*}
	for each $a\in A$. Given $\heartsuit\in \{\square, \pastlog\}$ and  $D^\heartsuit \subseteq A$, we say that $h$ satisfies the $\heartsuit$-bounded domain condition (BDC$^\heartsuit$) for  $D^\heartsuit$ if 
	\[h(\heartsuit a)=\heartsuit h(a)\]
	for each $a\in D^\heartsuit$. In both the si/bsi and the modal/tense case, we say that $h$ satisfies the BDC for $ D$ if $h$ satisfies the BDC$^\heartsuit$ for $D^\heartsuit$ for each $D^\heartsuit$ in $D$. 
	
	The next result gives a uniform description of the refutation conditions of stable canonical rules on algebras in both the signatures under discussion. 
	\begin{proposition}[Cf. {\cite[Lemma 4.3]{BezhanishviliBezhanishvili2017LFRoHAaCF}, \cite[Thm. 5.4]{BezhanishviliEtAl2016SCR}}]
		For every stable canonical rule $\scr{A}{D}$ and every algebra $\alg{B}$ having the same signature as $\alg{A}$, we have that $\alg{B}\not\models \scr{A}{D}$ iff there is a stable embedding $h:\alg{B}\to \alg{A}$ satisfying the BDC for $D$. \label{refutalg}
	\end{proposition}
	\begin{proof}[Proof sketch]
		We use the identity $V(p_a)=h(a)$ to define either the desired stable embedding satisfying the BDC or the desired valuation. 
	\end{proof}

	Stable canonical rules also have uniform refutation conditions on spaces and Kripke frames. If $\spa{X, Y}$ are spaces, a map $f:\spa{X}\to \spa{Y}$ is called \emph{stable} when it is continuous and relation preserving, in the sense that $x\leq y$ implies $f(x)\leq f(y)$ for each $x, y\in X$. If $\spa{X, Y}$ are Kripke frames rather than spaces, we call $f:\spa{X}\to \spa{Y}$ \emph{stable} when it is relation preserving. In either case, given $\spa{d}\subseteq Y$, we say that $f$ satisfies the \emph{upward bounded domain condition} (BDC$\Uparrow$) for $\spa{d}$ when, for all $x\in X$, we have 
	\[{{\Uparrow}}f(x)\cap \spa{d}\neq\varnothing\Rightarrow f[{{\Uparrow}}x]\cap \spa{d}\neq\varnothing.\]
	This is to say: if there is $y\in \spa{d}$ such that $f(x)\preceq y$, then there must be some $z\in X$ with $x\preceq z$ and $f(z)\in \spa{d}$.  Analogously, we say that $f$ satisfies the \emph{downward bounded domain condition} (BDC$\Downarrow$) for $\spa{d}$ when 
	for all $x\in X$, we have 
	\[{{\Downarrow}}f(x)\cap \spa{d}\neq\varnothing\Rightarrow f[{{\Downarrow}}x]\cap \spa{d}\neq\varnothing.\]
	Thus the BDC$\Uparrow$ and BDC$\Downarrow$ are generalizations of the defining order-theoretic conditions of a bounded morphism.

	Given $\spa{D}^*\subseteq \wp(Y)$ for $*\in\{\Uparrow, \Downarrow\}$, we say that $f$ satisfies the BDC$^*$  for $\spa{D}^*$ when it satisfies the BDC$^*$  for each  $\spa{d}\in\spa{D}^*$. Given a tuple $\spa{D}$ with either one or two coordinates, we say that $f$ satisfies the BDC for $\spa{D}$ when $f$ satisfies the BDC$\Uparrow$ for the first coordinate of $\spa{D}$ and the BDC$\Downarrow$ for the second coordinate of $\spa{D}$, if it exists. Thus the BDC$\Uparrow$ is associated with the connectives $\square$ and $\to$, whereas the BDC$\Downarrow$ is associated with the connectives $\pastlog$ and $\from$. 

	Let $\scr{A}{D}$ be a stable canonical rule. We define a mapping $D^\heartsuit \mapsto  \spa{d}^\heartsuit$ by putting 
	$\spa{d}^\heartsuit:=\{\spa{d}^\heartsuit_{d}:d\in D\}$, with 
	\begin{align*}
		\spa{d}^\heartsuit_{(a, b)}&:=\beta(a)\smallsetminus \beta(b) &\heartsuit\in \{\to, \from\}\\
		\spa{d}^\square_a&:= -\beta(a) &\\
		\spa{d}^\pastlog_a&:= \beta(a), &
	\end{align*}
	where $\beta$ is the Stone map. We then let  $\spa{D}:=(\spa{d}^\heartsuit)_{\heartsuit\in \ops{\alg{A}}}$.

	\begin{proposition}
		For every stable canonical rule $\scr{A}{D}$ and for every space (resp.\ Kripke frame) $\spa{X}$, we have $\spa{X}\nvDash\scr{A}{D}$ iff there is a stable surjection $f:\spa{X}\to \alg{A}_*$ satisfying the BDC for $\spa{D}$. \label{refutspace}
	\end{proposition}
	\begin{proof}
		The case for modal spaces is proved in \cite[Thm. 3.6]{BezhanishviliEtAl2016SCR}; essentially the same argument can be used for the remaining cases involving spaces. In each of these cases, one appeals to the appropriate duality result from \Cref{topodual}. To establish the cases involving Kripke frames, one can adapt the same argument, but replacing appeals to duality results from \Cref{topodual} with appeals to duality results from \Cref{kripkedual}.
	\end{proof}
    \begin{convention}
        In view of \Cref{refutspace}, we adopt the convention of writing a stable canonical rule $\scr{A}{D}$ as $\scr{A_*}{\spa{D}}$ when working with spaces. 
    \end{convention}

	 Stable maps and the BDC are closely related to the filtration construction. We recall its definition in an algebraic setting, and state the fundamental theorem used in most of its applications. 
	\begin{definition}
		Let $\alg{A}$ be an algebra, $V$ a valuation on $\alg{A}$, and $\Theta$ a finite, subformula closed set of formulae. A (finite) model $(\alg{B}, V')$ is called a (\emph{finite}) \emph{filtration of $(\alg{A}, V)$ through $\Theta$} if the following conditions hold:
		\begin{enumerate}
			\item \emph{Heyting and bi-Heyting case}:
			\begin{enumerate}
				\item The bounded lattice reduct of $\alg{B}$ is isomorphic to the bounded sublattice of $\alg{A}$ generated by $V[\Theta]$;
				\item $V(p)=V'(p)$ for every propositional variable $p\in \Theta$;
				\item The inclusion $\subseteq:\alg{B}\to \alg{A}$ is a stable embedding satisfying the BDC$^\heartsuit$ for the set \[\{( V'(\varphi),  V'(\psi)):\varphi\heartsuit \psi\in \Theta\},\]
				for each $\heartsuit\in \ops{\alg{A}}$
			\end{enumerate}
			\item \emph{Modal and tense case}:
			\begin{enumerate}
				\item The Boolean algebra reduct of $\alg{B}$ is isomorphic to the Boolean subalgebra of $\alg{A}$ generated by $V[\Theta]$;
				\item $V(p)=V'(p)$ for every propositional variable $p\in \Theta$;
				\item The inclusion $\subseteq:\alg{B}\to \alg{A}$ is a stable embedding satisfying the BDC$^\heartsuit$ for the set \[\{V(\varphi):\heartsuit \varphi\in \Theta\},\]
				fore each $\heartsuit\in \ops{\alg{A}}$
			\end{enumerate}
		\end{enumerate}
	 \end{definition}
	 \begin{theorem}[Filtration theorem]
		Let $\alg{A}$ be an algebra, $V$ a valuation on $\alg{A}$, and $\Theta$ a finite, subformula closed set of formulae. If $(\alg{B}, V')$ is a filtration of $(\alg{A}, V)$ through $\Theta$, then for every $\varphi\in \Theta$ we have
	\[V(\varphi)=V'(\varphi).\]
	Consequently, for every rule $\Gamma/\Delta$ such that $\Gamma, \Delta\subseteq \Theta$  we have 
	\[\alg{A}, V\vDash \Gamma/\Delta\iff \alg{B}, V'\vDash \Gamma/\Delta.\]
\end{theorem}




The next result establishes that every rule is equivalent to finitely many stable canonical rules. The restriction of this lemma to si and modal rules was proved in \cite[Proposition 3.3]{BezhanishviliEtAl2016CSL}, \cite[Thm. 5.5]{BezhanishviliEtAl2016SCR}. 
\begin{lemma}[Cf. {\cite[Proposition 3.3]{BezhanishviliEtAl2016CSL}, \cite[Thm. 5.5]{BezhanishviliEtAl2016SCR}}]
	The following conditions hold:\label{rewrite} 
	\begin{enumerate}
		\item For every (b)si rule 
		$\Gamma/\Delta$ there is a finite set $\Xi$ of (b)si  stable canonical rules such that for any (bi-)Heyting algebra $\alg{K}$ we have  $\alg{K}\nvDash \Gamma/\Delta$ iff there is $\scr{H}{D}\in \Xi$ such that $\alg{K}\nvDash \scr{H}{D}$.
		\item For every modal rule $\Gamma/\Delta$ there is a finite set $\Xi$ of modal stable canonical rules of the form $\scrmod{M}{D}$ with $\alg{M}$ a $\logic{K4}$-algebra, such that for any $\logic{K4}$ algebra $\alg{N}$  we have that $\alg{N}\nvDash \Gamma/\Delta$ iff there is $\scrmod{M}{D}\in \Xi$ such that $\alg{N}\nvDash \scrmod{M}{D}$. 
        \item For every modal (resp.\ tense) rule $\Gamma/\Delta$ there is a finite set $\Xi$ of modal (resp.\ tense) stable canonical rules of the form $\scrmod{M}{D}$ with $\alg{M}$ a $\logic{S4(.t)}$-algebra, such that for any $\logic{S4(.t)}$ algebra $\alg{N}$  we have that $\alg{N}\nvDash \Gamma/\Delta$ iff there is $\scrmod{M}{D}\in \Xi$ such that $\alg{N}\nvDash \scrmod{M}{D}$. 
	\end{enumerate}
\end{lemma}
\begin{proof}
	We spell out the proofs of the si and bsi case to illustrate the exact role of filtration in the machinery of stable canonical rules.  When $\Gamma/\Delta$ is a rule, write $\mathit{Sfor}(\Gamma/\Delta)$ for the set of all formulas that are subformulas of some $\gamma\in \Gamma$ or some $\delta\in \Delta$. 
	Since bounded distributive lattices are locally finite, there are, up to isomorphism, only finitely many pairs $(\alg{H},  D)$ such that 
	\begin{itemize}
		\item $\alg{H}$ is a (bi-)Heyting  algebra which is at most $k$-generated as a bounded distributive lattice, where $k=|\mathit{Sfor}(\Gamma/\Delta)|$; 
		\item $ D=(D^\heartsuit)_{\heartsuit\in \ops{\alg{H}}}$ with $D^\heartsuit=\{(V(\varphi), V(\psi)):\varphi\heartsuit \psi\in \mathit{Sfor}(\Gamma/\Delta)\}$, where  $V$ is a valuation on $\alg{H}$ refuting $\Gamma/\Delta$.
	\end{itemize}
	Let $\Xi$ be the finite set of all rules $\scrsi{H}{D}$ for all such pairs $(\alg{H}, D)$, identified up to isomorphism. 
	
	$(\Rightarrow)$ Assume $\alg{K}\nvDash \Gamma/\Delta$ and take a valuation $V$ on $\alg{K}$ refuting $\Gamma/\Delta$. Consider the bounded distributive sublattice $\alg{J}$ of $\alg{K}$ generated by $V[\mathit{Sfor}(\Gamma/\Delta)]$. Since bounded distributive lattices are locally finite, $\alg{J}$ is finite. Moreover $\alg{J}$ may be viewed as a Heyting or bi-Heyting algebra by defining one or both of the following operations on $\alg{J}$:
	\begin{align*}
		a\rightsquigarrow b&:=\bigvee\{c\in J: a\land b\leq c\}\\
		a\leftsquigarrow b&:=\bigwedge\{c\in J: a\leq b\lor c\}.
	\end{align*}
	Define a valuation $V'$ on $\alg{J}$ with $V'(p)=V(p)$ if $p\in \mathit{Sfor}(\Gamma/\Delta)$, $V'(p)$ arbitrary otherwise. 
	Since $\alg{J}$ is a sublattice of $\alg{K}$, the inclusion $\subseteq$ is a stable embedding. 
	\begin{itemize}
		\item Let $\varphi\to \psi\in \mathit{Sfor}(\Gamma/\Delta)$. Then $V'(\varphi)\to V'(\psi)\in J$. Since $\subseteq$ is a stable embedding we have $V'(\varphi)\rightsquigarrow V'(\psi)\leq V'(\varphi)\to V'(\psi)$. Conversely, by the definition of $\rightsquigarrow$ we find $V'(\varphi)\rightsquigarrow V'(\psi)\land V'(\varphi)\leq V'(\psi)$. By the properties of Heyting algebras it follows that $V'(\varphi)\rightsquigarrow V'(\psi)\leq  V'(\varphi)\to  V'(\psi)$. Thus, $V'(\varphi)\rightsquigarrow V'(\psi)=  V'(\varphi)\to  V'(\psi)$
		\item By analogous reasoning, whenever $\varphi\from \psi\in \mathit{Sfor}(\Gamma/\Delta)$ we have that $V'(\varphi)\leftsquigarrow V'(\psi)=V'(\varphi)\from V'(\psi)$. 
	\end{itemize}
	We have thus shown  that the model $(\alg{J}, V')$ is a filtration of the model $(\alg{K}, V)$ through $\mathit{Sfor}(\Gamma/\Delta)$, which implies $\alg{J}, V'\nvDash \Gamma/\Delta$.
	
	$(\Leftarrow)$ Assume that there is $\scrsi{H}{D}\in \Xi$ such that $\alg{K}\nvDash \scrsi{H}{D}$. Let $V$ be the valuation associated with $ D$ in the sense spelled out above. Then $\alg{H}, V\nvDash \Gamma/\Delta$. Moreover, $(\alg{H}, V)$ is a filtration of the model $(\alg{K}, V)$, so by the filtration theorem it follows that $\alg{K}, V\nvDash \Gamma/\Delta$.
\end{proof}

The proofs of the modal and tense cases of \Cref{rewrite} are analogous, appealing to the local finiteness of Boolean algebras instead of the local finiteness of bounded distributive lattices. While filtrations of models based on modal or tense algebras are not unique, they can always be constructed. Furthermore, a model based on a $\logic{K4}$-algebra always has a filtration which is itself based on a $\logic{K4}$-algebra, and a model based on a $\logic{S4(.t)}$-algebra always has a filtration which is itself based on a $\logic{S4(.t)}$-algebra. These observations allow one to prove the second and third parts of \Cref{rewrite} by essentially the same argument. 

As a consequence of \Cref{rewrite} we obtain uniform axiomatizations of si, bsi, modal and tense rule systems in terms of stable canonical rules. 
\begin{theorem}[{\cite[Cf.][Proposition 3.4]{BezhanishviliEtAl2016CSL}}]
	The following conditions hold: \label{axiomatisationsi}
	\begin{enumerate}
		\item Any si, bsi, modal and tense rule system  is axiom\-atisable, over the least rule system of the same kind, by some set of  stable canonical rules;
		\item Every modal rule system above $\logic{K4}$ is axiomatizable, over $\logic{K4}$, by a set of stable canonical rules based on $\logic{K4}$-algebras. 
		\item Every modal (resp.\ tense) rule system above $\logic{S4(.t)}$  is axiomatizable, over $\logic{S4(.t)}$, by a set of stable canonical rules based on $\logic{S4(.t)}$-algebras.
	\end{enumerate}
\end{theorem}

We close this section with a brief comparison of our  stable canonical rules with  Je\v{r}ábek-style Canonical Rules. Our bsi and tense stable canonical rules generalize si and modal stable canonical rules in a way that mirrors the intimate connection existing between Heyting and bi-Heyting algebras on the one hand, and modal and tense algebras on the other. Just like a bi-Heyting algebra is nothing but a Heyting algebra whose order-dual is also a Heyting algebra, so every bsi stable canonical rule is a sort of ``independent fusion" between two si stable canonical rules, whose associated Heyting algebras are order-dual to one another. Similarly for the tense case.

Je\v{r}ábek-style si and modal canonical rules (like \z-style si and modal canonical formulae), by contrast, do not generalize as smoothly to the bsi and tense case. Algebraically, a Je\v{r}ábek-style si canonical rule may be defined as follows (cf. \cite{BezhanishviliBezhanishvili2009AAAtCFIC,BezhanishviliEtAl2016CSL}). 
\begin{definition}
 	Let $\alg{H}\in \mathsf{HA}$ be finite and let $D\subseteq H$. The \emph{si canonical rule} of $(\alg{H}, D)$ is the rule $\zeta(\alg{H}, D)=\Gamma/\Delta$, where
 	\begin{align*}
 		\Gamma:=&\{p_0\leftrightarrow \bot\}\cup\\
 		&\{p_{a\land b}\leftrightarrow p_a\land p_b:a, b\in H\}\cup\{p_{a\to b}\leftrightarrow p_a\to p_b:a, b\in H\}\cup\\
 		&\{p_{a\lor b}\leftrightarrow p_a\lor p_b:(a, b)\in D\}\\
 		\Delta:=&\{p_a\leftrightarrow p_b:a, b\in H\text{ with }a\neq b\}.
 	\end{align*}
\end{definition}
Generalizing the proof of \cite[Corollary 5.10]{BezhanishviliEtAl2016CSL}, one can show that every si rule is equivalent to finitely many si canonical rules. The key ingredient in this proof is a characterization of the refutation conditions for si canonical rules: $\zeta(\alg{H}, D)$ is refuted by a Heyting algebra $\alg{K}$ iff there is a $(\land, \to, 0)$-embedding $h:\alg{H}\to \alg{K}$ preserving $\lor$ on elements from $D$. Because $(\land, \to, 0)$-algebras are locally finite, a result known as \emph{Diego's theorem}, one can then reason as in the proof of \Cref{rewrite} to reach the desired result. 

It should be clear that if one defined the bsi canonical rule $\zeta_B(\alg{H}, D, D')$ by combining the rules  $\zeta(\alg{H}, D)$ and $\zeta({\alg{H}}, D')$ the same way bsi stable canonical rule combine si stable canonical rules, then $\zeta_B(\alg{H}, D, D')$ would be refuted by a bi-Heyting algebra $\alg{K}$ iff there is a bi-Heyting algebra embedding $h:\alg{H}\to \alg{K}$. Since the variety of bi-Heyting algebras is not locally finite, this refutation condition is clearly too strong to deliver a result to the effect that every bsi rule is equivalent to a set of bsi canonical rules. Without such a result, in turn, there is no hope of axiomatizing every rule system over $\logic{biIPC}$ by means of bsi canonical rules. 

Similar remarks hold in the tense case. \citet{BezhanishviliEtAl2011AAAtSLMC} show that the proof of the fact that every modal formula is equivalent, over $\logic{S4}$, to finitely many modal \z-style canonical formulae of $\logic{S4}$-algebras rests on an application of Diego's theorem \cite[cf.][Main Lemma]{BezhanishviliEtAl2011AAAtSLMC}. This has to do with how selective filtrations of $\logic{S4}$-algebras are constructed. Given a $\logic{S4}$-algebra $\alg{B}$ refuting a rule $\Gamma/\Delta$, a key step in constructing  a finite selective filtration of $\alg{B}$ through $\mathit{Sfor}(\Gamma/\Delta)$ consists in generating a $(\land, \to, 0)$-subalgebra of $\fragment \alg{A}$ from a finite subset of $O(A)$. This structure is guaranteed to be finite by Diego's theorem. On the most obvious ways of generalizing this construction to tense algebras, we would need to replace this step with one of the following:
\begin{enumerate}
	\item Generate both a $(\land, \to, 0)$-subalgebra of $\fragment \alg{A}$ and a  $(\lor, \from, 1)$-subalgebra of $\fragment \alg{A}$ from a finite subset of $O(A)$;
	\item Generate a bi-Heyting subalgebra of $\fragment \alg{A}$ from a finite subset of $O(A)$. 
\end{enumerate}
On option 1, Diego's theorem and its order dual would guarantee that both the $(\land, \to, 0)$-subalgebra of $\fragment \alg{A}$ and the  $(\lor, \from, 1)$-subalgebra of $\fragment \alg{A}$ are finite. However, it is not clear how one could then combine the two subalgebras into a bi-Heyting algebra, which is required to obtain a selective filtration based on a tense algebra. On option 2, on the other hand, we would indeed obtain a bi-Heyting subalgebra of $\fragment \alg{A}$, but not necessarily a finite one, since bi-Heyting algebras are not locally finite. 

We realize that the argument sketches just presented are far from conclusive, so we do not go as far as ruling out the possibility that Je\v{r}ábek-style bsi and tense canonical rules could  somehow be developed in such a way as to be a suitable tools for developing the theory of tense companions of bsi-rule systems. What such rules would look like, and in what sense they would constitute genuine generalizations of Je\v{r}ábek's canonical rules and \z's canonical formulae are interesting questions, but one we cannot hope to adequately answer here. The point we wish to stress is that answering this sort of questions  is a non-trivial matter, whereas generalizing stable canonical rules to the bsi and tense setting is a completely routine task. On our approach, exactly the same methods used in the si and modal case work equally well in the bsi and tense case.

\section{Blok-Esakia Theorems and the Kuznetsov-Muravitsky Isomorphism}

	\label{sec:mc}
	We now set out to develop the theory of modal and tense companions of si and bsi rule systems using the machinery of stable canonical rules just presented. For each of the three pairs of signatures $\simod, \bsiten$ and  $\msimod$ under discussion, we prove that the companions of a rule system form an interval, and establish a Blok-Esakia like result. The original Blok-Esakia theorem and the Kuznetsov-Muravitsky isomorphism follow as corollaries.

	\subsection{Novel Proofs}

	The main problem one needs to deal with in order to prove the results just announced is showing that each syntactic mapping $\greatest[\sig]$ is surjective on the codomain $\class{C}^+_\sig$ (recall we are using the notation introduced in \Cref{signatureconvention}.) The novelty of our approach lies in the use of stable canonical rules to establish that result. 

	Our strategy is centered on the following lemma. 
	\begin{lemma}[Main Lemma]
		Given $\sig\in \{\simod, \bsiten, \msimod\}$, let $\alg{M}\in \class{C}^+_\sig$. Then for every rule $\Gamma/\Delta$ in the classical signature of $\sig$ we have that $\alg{M}\models\Gamma/\Delta$ iff $\greatest[\sig]\fragment[\sig]\alg{M}\models \Gamma/\Delta$.\label{mainlemma}
	\end{lemma}
	In each of the three cases, the $(\Rightarrow)$ direction is immediate from \Cref{cor:representationHAS4}. We give full proofs of the other direction in the cases $\sig=\bsiten$ and $\sig=\msimod$. A proof of the case $\sig=\simod$ can be derived from the proof of the case $\sig=\bsiten$ by making minimal adaptations, which we shall sketch.
	\begin{proof}[Proof of Case $\sig=\bsiten$]
	    We prove the dual statement that $\alg{M}_*\nvDash \Gamma/\Delta$ implies $\greatest\fragment\alg{M}_*\nvDash \Gamma/\Delta$. Let $\spa{X}:=\alg{M}_*$. In view of \Cref{axiomatisationsi} it is enough to consider the case $\Gamma/\Delta=\scrmod{F}{\spa{D}}$, for $\spa{F}$ the dual of a finite $\logic{S4(.t)}$-algebra. So suppose $\spa{X}\nvDash \scrmod{F}{\spa{D}}$. By \Cref{refutspace}, there is a stable map $f:\spa{X}\to \spa{F}$ satisfying the BDC for $\spa{ D}:=(\spa{D}^\Uparrow, \spa{D}^\Downarrow)$.  We construct a stable map $g:\greatest\fragment\spa{X}\to \spa{F}$ which satisfies the BDC for $\spa{ D}$.
		
		Let $C:=\{x_1, \ldots, x_n\}\subseteq F$ be some cluster and let $Z_C:=f^{-1}(C)$. Since $f$ is relation-preserving, $Z_C$ does not cut clusters. Therefore, by \Cref{rhoproperties}, $\varrho[Z_C]$ is clopen, and so is $f^{-1}(x_i)$ for each $x_i\in C$. Now for each $x_i\in C$ let 
		\begin{align*}
			M_i&:=\mathit{max}_R(f^{-1}(x_i))\\
			N_i&:=\mathit{min}_R(f^{-1}(x_i)).
		\end{align*}
		By \Cref{propk4} and \Cref{grz-max}, both $M_i, N_i$ are closed, and moreover neither cuts any cluster. Consequently, by \Cref{rhoproperties} again, both $\varrho[M_i], \varrho[N_i]$ are closed as well.

		For each $x_i\in C$ let $O_i:=M_i\cup N_i$. Clearly, $O_i\cap O_j=\varnothing$ for each distinct $i, j\leq n$, and since no $O_i$ cuts any cluster this implies $\varrho[O_i]\cap \varrho[O_j]$ for each distinct $i, j\leq n$. We shall now  find disjoint clopens $U_1, \ldots, U_n\in \mathsf{Clop}(\greatest\fragment\spa{X})$ with $\varrho[O_i]\subseteq U_i$ and $\bigcup_i U_i=\varrho[Z_C]$. Let $k\leq n$ and assume that  $U_i$ has been defined for all $i<k$. If $k=n$, put $U_n=\varrho[Z_C]\smallsetminus\left(\bigcup_{i< k} U_i\right)$ and we are done. Otherwise set $V_k:=\varrho[Z_C]\smallsetminus\left(\bigcup_{i< k} U_i\right)$ and observe that it contains each $\varrho[O_i]$ for  $k\leq i\leq n$. By the separation properties of Stone spaces, for each $i$ with  $k<i\leq n$ there is some $U_{k_i}\in \mathsf{Clop}(\greatest\fragment\spa{X})$ with $\varrho[O_k]\subseteq U_{k_i}$ and $\varrho[M_i]\cap U_{k_i}=\varnothing$. Then set $U_k:=\bigcap_{k<i\leq n} U_{k_i}\cap V_k$. 
		
		We can now define a map 
		\begin{align*}
			g_C&: \varrho[Z_C]\to C\\
			z&\mapsto x_i\iff z\in U_i.
		\end{align*}
		Clearly, $g_C$ is relation preserving. Finally, define $g: \greatest\fragment\spa{X}\to F$ by setting
		\[
		g(\varrho(z)):=\begin{cases}
			f(z)&\text{ if } f(z)\text{ does not belong to any proper cluster }\\
			g_C(\varrho(z))&\text{ if }f(z)\in C\text{ for some proper cluster }C\subseteq F.
		\end{cases}
		\]
		Now, $g$ is evidently relation preserving. Moreover, it is continuous because both $f$ and each $g_C$ are. Thus, $g$ is a stable map. 
		
		We must now show that $g$ satisfies the BDC for $\spa{ D}$.  Suppose $Rg(\varrho(x)) y$ and $y\in \spa{d}$ for some $\spa{d}\in \spa{D}^\Uparrow$. By construction, $f(x)$ belongs to the same cluster as $g(\varrho(x))$, so also $Rf(x)y$. Since $f$ satisfies the BDC$^\Uparrow$ for $\spa{D}^\Uparrow$, there must be some $z\in X$ such that $Rxz$ and $f(z)\in \spa{d}$. Since $f^{-1}(f(z))\in \mathsf{Clop}(\spa{X})$, by \Cref{propk4} and \Cref{grz-max} there is $z'\in \mathit{max}(f^{-1}(f(z)))$ with $Rzz'$. 
		Then also $Rxz'$ and $f(z')\in \spa{d}$.
	   But from  $z'\in \mathit{max}(f^{-1}(f(z)))$ it follows that $f(z')=g(\varrho(z'))$ by construction, so we have $g(\varrho(z'))\in \spa{d}$. As clearly $R\varrho(x)\varrho(z')$, we have shown that $g$ satisfies the BDC$^\Uparrow$ for $\spa{D}^\Uparrow$. Analogous reasoning establishes that $g$ satisfies the BDC$^\Downarrow$ for $\spa{D}^\Downarrow$. By \Cref{refutspace} this implies $\greatest\fragment\spa{X}\not\models \scrmod{F}{\spa{D}}$.
	\end{proof} 

    \begin{proof}[Proof of Case $\sig=\simod$]
        To obtain a proof of this case, one runs essentially the same argument, but ignoring the sets $N_i$ in the construction of the map $g$. That is, one partitions each $\varrho[Z_C]$ into disjoint clopens, in such a way that each closed set of the form $\varrho[M_i]$, instead of $\varrho[O_i]$, is contained in one of such clopens. The rest of the construction is carried out the same way. 
    \end{proof}
	  
        \begin{proof}[Proof of Case $\sig=\msimod$]
           As before, we prove the dual statement  $\dualspa{M}\not\models \Gamma/\Delta$ implies $\greatest\fragment\dualspa{M}\not\models \Gamma/\Delta$. Let $\spa{X}:=\alg{M}_*$. Using \Cref{axiomatisationsi}, wlog, we restrict attention to the case $\Gamma/\Delta=\scrmod{F}{\spa{D}}$, for $\spa{F}$ the dual of a finite $\logic{K4}$-algebra. By \Cref{refutspace}, we take  a stable map $f:\spa{X}\to \spa{F}$ satisfying the BDC for $\spa{ D}$. 
		
	   Given a cluster $C:=\{x_1, \ldots, x_n\}\subseteq F$ we let $Z_C:=f^{-1}(C)$. Then $Z_C$ does not cut clusters, so by \Cref{rhoproperties}, $\varrho[Z_C]$ is clopen, and so is $f^{-1}(x_i)$ for each $x_i\in C$. For each $x_i\in C$ we let 
	   \begin{align*}
		   M_i&:=\mathit{max}_R(f^{-1}(x_i)).
	   \end{align*}
	   By \Cref{propk4} and \Cref{propgl}, each $M_i$ is clopen and does not cut any cluster. Consequently, by \Cref{rhoproperties}, each  $\varrho[M_i]$ is clopen. 
   
		Note $M_i\cap M_j=\varnothing$ holds for each distinct $i, j\leq n$, and since no $M_i$ cuts clusters we have $\varrho[M_i]\cap \varrho[M_j]=\varnothing$ for each distinct $i, j\leq n$. Constructing the desired partition of  $\varrho[Z_C]$ into clopen sets is now simpler. First, for $k<n$ let $U_k=\varrho[M_k]$. Then let $U_n:=\varrho[Z_C]\smallsetminus (U_1\cup\cdots \cup U_n)$. Then $U_1, \ldots, U_n$ are clopen sets which partition $\varrho[Z_C]$, such that $\varrho[M_k]\subseteq U_k$ for each $1\leq k\leq n$. 
	   
	   We define our map $g:\greatest[\msimod]\fragment[\msimod]\spa{X}\to \spa{F}$ as before. First, we let
	   \begin{align*}
		   g_C&: \varrho[Z_C]\to C\\
		   z&\mapsto x_i\iff z\in U_i.
	   \end{align*}
	   Clearly, $g_C$ is relation preserving. Finally, define $g: \greatest[\msimod]\fragment[\msimod]\spa{X}\to F$ by setting
	   \[
	   g(\varrho(z)):=\begin{cases}
		   f(z)&\text{ if $f(z)$ belongs to no proper cluster}  \\
		   g_C(\varrho(z))&\text{ if }f(z)\in C\text{ for some proper cluster }C\subseteq F.
	   \end{cases}
	   \]
	   It is clear that $g$ is a stable map. We show that it satisfies the BDC for $\spa{D}$. 
   
	   Suppose $Rg(\varrho(x))y$ and $y\in \spa{d}$ for some $\spa{d}\in \spa{D}$. If $\upset{f(x)}$ belongs to no proper cluster, then $g(\varrho(x))=f(x)$, so $Rf(x)z$. If $f(x)$ belongs to a proper cluster, then $g(\varrho (x))$ belongs to the same proper cluster, so again $Rf(x)z$. Either way, $Rf(x)z$. Since $f$ satisfies the BDC for $\spa{d}$, there must be some $x\in Z$ such that $Rxz$ and $f(z)\in \spa{d}$. Now, $f^{-1}(f(z))\in \clop{X}$, so by \Cref{propk4} and \Cref{propgl}, one of the following conditions hold:
	   \begin{enumerate}
		   \item $z\in \mathit{max}(f^{-1}(f(z)))$;
		   \item There is $z'\in \mathit{max}(f^{-1}(f(z)))$ with $Rzz'$.
	   \end{enumerate}
	   Either way, there is $z'\in \mathit{max}(f^{-1}(f(z)))$ such that $Rxz'$. But by construction, since  $z'\in \mathit{max}(f^{-1}(f(z)))$,  we have $f(z')=g(\varrho(z'))$. Consequently,  $g(\varrho(z'))\in \spa{d}$. As clearly $R\varrho(x)\varrho(z')$, we have shown that $g$ satisfies the BDC for $\spa{D}$.  
        \end{proof}

	Our main lemma has the following key consequence. 
	\begin{theorem}[Skeletal generation theorem]
		Let $\sig\in \{\simod, \bsiten, \msimod\}$. Every universal class $\class{U}\in \class{C}^+_\sig$ is generated by its skeletal elements, i.e., $\class{U}=\greatest[\sig] \fragment[\sig]\class{U}$.\label{unigrzgeneratedskel}
	\end{theorem}
	\begin{proof}
		Since $\greatest \fragment\alg{M}$ is a subalgebra of $\alg{M}$ for each $\alg{M}\in \class{U}$ (\Cref{cor:representationHAS4}), surely $\greatest\fragment\class{U}\subseteq \class{U}$. Conversely, suppose $\class{U}\nvDash \Gamma/\Delta$. Then there is $\alg{M}\in \class{U}$ with $\alg{M}\nvDash \Gamma/\Delta$. By \Cref{mainlemma} it follows that $\greatest\fragment\alg{M}\nvDash\Gamma/\Delta$. This shows $\mathsf{ThR}(\greatest\fragment\class{U})\subseteq \mathsf{ThR}(\class{U})$, which is equivalent to $\class{U}\subseteq \greatest\fragment\class{U}$. Hence, indeed, $\class{U}=\greatest \fragment\class{U}$.
	\end{proof}
	\begin{remark}
		The restriction of  \Cref{unigrzgeneratedskel} to varieties of $\logic{Grz}$-algebras  plays an important role in the algebraic proof of the Blok-Esakia theorem for si and modal logics given by \citet{Blok1976VoIA}.  The generalization to universal classes of modal algebras is explicitly stated and proved in \cite[Lemma 4.4]{Stronkowski2018OtBETfUC} using a generalization of Blok's approach, although it also follows from \cite[Theorem 5.5]{Jerabek2009CR}.  Blok establishes the restricted version of \Cref{unigrzgeneratedskel} as a consequence of what is now known as the \emph{Blok lemma}. The proof of the Blok lemma is notoriously involved. By contrast, our techniques afford a direct and, we believe, semantically transparent proof of \Cref{unigrzgeneratedskel}. \label{remark:blok}
	\end{remark}

	\subsection{Main results}

	The main results of this section follow from \Cref{unigrzgeneratedskel} by routine arguments; we review them here for completeness. We begin by establishing that the syntactic mappings  $\least, \fragment, \greatest$ commute with $\op{Alg}(\cdot)$.
\begin{lemma}
	Given $\sig\in \{\simod, \bsiten, \msimod\}$, let $\logic{L}\in \lat{Ext}(\logic{I}_\sig)$ and let $\logic{M}\in \lat{NExt}(\logic{C}_\sig)$. The following conditions hold:\label{prop:mcmapscommute}
	\begin{align}
		\op{Alg}(\least[\sig]\logic{L})&=\least[\sig] \op{Alg}(\logic{L}) \label{prop:mcmapscommute1}\\
		\op{Alg}(\greatest[\sig]\logic{L})&=\greatest[\sig] \op{Alg}(\logic{L})\label{prop:mcmapscommute2}\\
		\op{Alg}(\fragment[\sig]\logic{M})&=\fragment[\sig] \op{Alg}(\logic{M})\label{prop:mcmapscommute3}
	\end{align}
\end{lemma}
\begin{proof}
	(\ref{prop:mcmapscommute1}) For every  $\alg{M}\in \class{C}_\sig$ we have $\alg{M}\in \op{Alg}(\least\logic{L})$ iff  $\alg{M}\models T(\Gamma/\Delta)$ for all $\Gamma/\Delta\in \logic{L}$ iff $\fragment\alg{M}\models \Gamma/\Delta$ for all $\Gamma/\Delta\in \logic{L}$ iff $\fragment\alg{M}\in \op{Alg}(\logic{L})$ iff $\alg{M}\in \least \op{Alg}(\logic{L})$.
	
	(\ref{prop:mcmapscommute2}) In view of \Cref{unigrzgeneratedskel} it suffices to show that $\op{Alg}(\greatest\logic{L})$ and $\greatest \op{Alg}(\logic{L})$ have the same skeletal elements. So let $\alg{M}=\greatest\fragment\alg{M}\in \greatest \op{Alg}(\logic{L})$.  Since $\greatest \op{Alg}(\logic{L})$ is generated by $\{\greatest\alg{H}:\alg{H}\in \op{Alg}(\logic{L})\}$ as a universal class, by \Cref{cor:representationHAS4} and \Cref{lem:gtskeleton} we have $\alg{M}\models T(\Gamma/\Delta)$ for every $\Gamma/\Delta\in \logic{L}$. But then $\alg{M}\in \op{Alg}(\greatest\logic{L})$. Conversely, assume $\alg{M}=\greatest\fragment\alg{M}\in \op{Alg}(\greatest\logic{L})$. Then $\alg{M}\models T(\Gamma/\Delta)$ for every $\Gamma/\Delta\in \logic{L}$. By \Cref{lem:gtskeleton} this is equivalent to $\fragment\alg{M}\in \op{Alg}(\logic{L})$, therefore $\greatest\fragment\alg{M}=\alg{M}\in \greatest\op{Alg}(\logic{L})$.

	(\ref{prop:mcmapscommute3}) Let  $\alg{H}\in \fragment \op{Alg}(\logic{M})$. Then $\alg{H}=\fragment \alg{M}$ for some $\alg{M}\in \op{Alg}(\logic{M})$. It follows that for every si rule $T(\Gamma/\Delta)\in \logic{M}$ we have $\alg{M}\models T(\Gamma/\Delta)$, and so by \Cref{lem:gtskeleton} in turn $\alg{H}\models\Gamma/\Delta$. Therefore indeed $\alg{H}\in \op{Alg}(\fragment\logic{M})$. Conversely, for all  rules $\Gamma/\Delta$, if $\fragment\op{Alg}(\logic{M})\models \Gamma/\Delta$, then by \Cref{lem:gtskeleton} $\op{Alg}(\logic{M})\models T(\Gamma/\Delta)$, hence $\Gamma/\Delta\in \fragment\logic{M}$. Thus $\op{ThR}(\fragment\op{Alg}(\logic{M}))\subseteq  \fragment\logic{M}$, and so $\op{Alg}(\fragment\logic{M})\subseteq \fragment\op{Alg}(\logic{M})$.
\end{proof}

The result just proved leads straightforwardly to the following, purely semantic characterization of companions. 
\begin{lemma}
	Given $\sig\in \{\simod, \bsiten, \msimod\}$, let $\logic{L}\in \lat{Ext}(\logic{I}_\sig)$ and let $\logic{M}\in \lat{NExt}(\logic{C}_\sig)$. Then $\logic{M}$ is a companion of $\logic{L}$ iff $\op{Alg}(\logic{L})=\fragment[\sig]\op{Alg}(\logic{M})$. \label{mcsemantic}
\end{lemma}
\begin{proof}
	$(\Rightarrow)$ Assume $\logic{M}$ is a companion of $\logic{L}$. Then we have $\logic{L}=\fragment\logic{M}$. By \Cref{prop:mcmapscommute} $\op{Alg}(\logic{L})=\fragment\op{Alg}(\logic{M})$.

	$(\Leftarrow)$ Assume that $\op{Alg}(\logic{L})=\fragment\op{Alg}(\logic{M})$. Therefore, by \Cref{cor:representationHAS4}, $\alg{H}\in \op{Alg}(\logic{L})$ implies $\greatest \alg{H}\in \op{Alg}(\logic{M})$. This implies that for every  rule $\Gamma/\Delta$, $\Gamma/\Delta\in \logic{L}$ iff $T(\Gamma/\Delta)\in \logic{M}$.
\end{proof}

We can now prove the main results of this section. The first result asserts that the companions  of a rule system form an interval. 
\begin{theorem}[Interval theorem]
	Given $\sig\in \{\simod, \bsiten, \msimod\}$, let $\logic{L}\in \lat{Ext}(\logic{I}_\sig)$.  The companions of $\logic{L}$ form an interval in $\lat{NExt}(\logic{C}_\sig)$, where the least and greatest companions are given by $\least[\sig]\logic{L}$ and $\greatest[\sig]\logic{L}$. \label{intervaltheorem}
\end{theorem}
\begin{proof}
	In view of \Cref{prop:mcmapscommute} it suffices to prove that $\logic{M}$ is a  companion of $\logic{L}$ iff $\greatest\op{Alg}(\logic{L})\subseteq \op{Alg}(\logic{M})\subseteq\least\op{Alg}(\logic{L})$. 
	
	($\Rightarrow$) Assume $\logic{M}$ is a modal companion of $\logic{L}$. Then by \Cref{mcsemantic} we have $\op{Alg}(\logic{L})=\fragment\op{Alg}(\logic{M})$, therefore it is clear that $\op{Alg}(\logic{M})\subseteq \least \op{Alg}(\logic{L})$. To see that $\greatest\op{Alg}(\logic{L})\subseteq \op{Alg}(\logic{M})$ it suffices to show that every skeletal algebra in $\greatest\op{Alg}(\logic{L})$ belongs to $\op{Alg}(\logic{M})$. So let $\alg{M}\cong\greatest\fragment\alg{M}\in \greatest\op{Alg}(\logic{L})$. Then $\fragment\alg{M}\in \op{Alg}(\logic{L})$ by \Cref{lem:gtskeleton}, so there must be $\alg{N}\in \op{Alg}(\logic{M})$ such that $\fragment\alg{N}\cong \fragment\alg{M}$. But this implies $\greatest \fragment\alg{N}\cong \greatest \fragment\alg{M}\cong \alg{M}$, and as universal classes are closed under subalgebras, by \Cref{cor:representationHAS4} we conclude $\alg{M}\in \op{Alg}(\logic{M})$.
	
	($\Leftarrow$) Assume $\greatest\op{Alg}(\logic{L})\subseteq \op{Alg}(\logic{M})\subseteq\least\op{Alg}(\logic{L})$. It is an immediate consequence of \Cref{cor:representationHAS4} that $\fragment\greatest \op{Alg}(\logic{L})=\op{Alg}(\logic{L})$, which gives us $\fragment\op{Alg}(\logic{M})\supseteq\op{Alg}(\logic{L})$. But by construction $\fragment\op{Alg}(\logic{M})=\fragment\least \op{Alg}(\logic{L})$, hence $\fragment\op{Alg}(\logic{M})\subseteq\op{Alg}(\logic{L})$. Therefore, indeed, $\fragment\op{Alg}(\logic{M})=\op{Alg}(\logic{L})$, so by \Cref{mcsemantic} we conclude that  $\logic{M}$ is a modal companion of $\logic{L}$.
\end{proof}

The second result is an analogue of the Blok-Esakia theorem. We use the qualifier ``general'' to indicate that the theorem applies uniformly to three different pairs of signatures.  
\begin{theorem}[General Blok-Esakia theorem]
	Let $\sig\in \{\simod, \bsiten, \msimod\}$. The mappings $\greatest[\sig]$ and the restriction of $\fragment$ to $\lat{NExt}(\logic{C}^+_\sig)$ are mutually inverse complete lattice isomorphisms between $\lat{Ext}(\logic{I_\sig})$ and $\lat{NExt}(\logic{C}^+_\sig)$.
\label{blokesakiagen}
\end{theorem}
\begin{proof}
	It is enough to show that the algebraic class operators $\greatest: \mathbf{Uni}(\class{I}_\sig)\to \mathbf{Uni}(\class{C}^+_\sig)$ and $\fragment:\mathbf{Uni}(\class{C}^+_\sig)\to \mathbf{Uni}(\class{I}_\sig)$ are complete lattice isomorphisms and mutual inverses. 
	  Both maps are evidently order preserving, and preservation of infinite joins is an easy consequence of \Cref{lem:gtskeleton}. Let $\class{U}\in \mathbf{Uni}(\class{C}^+_\sig)$. Then $\class{U}=\greatest\fragment\class{U}$ by \Cref{unigrzgeneratedskel}, so $\greatest$ is surjective and a left inverse of $\fragment$. Now let $\class{U}\in \mathbf{Uni}(\class{I}_\sig)$. It is an immediate consequence of \Cref{cor:representationHAS4} that $\fragment\greatest \class{U}=\class{U}$. Hence $\fragment$ is surjective and a left inverse of $\greatest$. Thus $\greatest$ and $\fragment$ are mutual inverses, and therefore must both be bijections. 	
\end{proof}

We note that \Cref{intervaltheorem} remains true when restricted to lattices of \emph{logics} only. This result, in the case of pair of signatures $\simod$, was established by \citet{MaksimovaRybakov1974ALoNML}; see also \cite[Sec. 9.6]{ChagrovZakharyaschev1997ML}. The same holds for \Cref{blokesakiagen}. Thus we obtain, as corollaries, the original Blok-Esakia theorem (case $\sig=\simod$), Wolter's \cite{Wolter1998OLwC} generalization thereof to bsi and tense logics (case $\sig=\bsiten$), as well as the original Kuznetsov-Muravitsky isomorphism (case $\sig=\msimod$).
\begin{corollary}[General Blok-Esakia theorem for logics]
	Let $\sig\in \{\simod, \bsiten, \msimod\}$. The restrictions of the mappings $\greatest[\sig]:\lat{Ext}(\logic{I_\sig})\to \lat{NExt}(\logic{C}^+_\sig)$ and $\fragment[\sig]: \lat{NExt}(\logic{C}^+_\sig)\to \lat{Ext}(\logic{I_\sig})$ to the lattices of logics $\lat{ExtL}(\logic{I_\sig})$ and $\lat{NExtL}(\logic{C}^+_\sig)$ are complete lattice isomorphisms and mutual inverses.  \label{blokesakia}
\end{corollary}
\begin{proof}
	By construction, $\greatest$ and $\fragment$ preserve the property of being a logic, because the Gödel translation of an assumption free, single-conclusion rule is always an assumption free, single-conclusion rule.  Since we already know that the restrictions of $\greatest[\sig]$ and $\fragment[\sig]$ to logics are mutual inverses, it suffices to show that $\greatest[\sig]$ commutes with meets and joins. This is obvious for joins, since the join of two logics is a logic. For meets, it is enough to show that $\greatest[\sig]$ commutes with $\op{Taut}$: for each rule system $\logic{L}\in \lat{Ext}(\logic{I_\sig})$ we have $\greatest[\sig]\op{Taut}(\logic{L})=\op{Taut}(\greatest[\sig]\logic{L})$. For if this condition holds, then we have:
    \begin{align*}
        \greatest[\sig](\logic{L}\otimes_{\lat{ExtL}(\logic{I_\sig})}\logic{L'})&= \greatest[\sig](\op{Taut}(\logic{L}\otimes_{\lat{Ext}(\logic{I_\sig})}\logic{L'})) &\text{By \Cref{rulesystemlogic}}\\
        &=\op{Taut}(\greatest[\sig](\logic{L}\otimes_{\lat{Ext}(\logic{I_\sig})}\logic{L'}))\\
        &= \op{Taut}(\greatest[\sig]\logic{L}\otimes_{\lat{Ext}(\logic{I_\sig})}\greatest[\sig]\logic{L'})&\text{By \Cref{blokesakiagen}}\\
        &=\greatest[\sig]\logic{L}\otimes_{\lat{ExtL}(\logic{I_\sig})}\greatest[\sig]\logic{L'}. &\text{By \Cref{rulesystemlogic}}
    \end{align*}
    Note that the reasoning is analogous when we consider infinite meets.

    The claim indeed holds. Since $\greatest[\sig]$ is order-preserving and $\op{Taut}(\logic{L})\subseteq \logic{L}$ we have $\greatest[\sig]\op{Taut}(\logic{L})\subseteq\greatest[\sig]\logic{L}$. 
    Since $\op{Taut}$ is also order preserving and the left-hand side is already a logic, it follows that $\greatest[\sig]\op{Taut}(\logic{L})\subseteq\op{Taut}(\greatest[\sig]\logic{L})$.  Likewise, if $\logic{M}\in \lat{NExt}(\logic{I}_\sig^+)$, then $\fragment[\sig]\op{Taut}(\logic{M})\subseteq \op{Taut}(\fragment[\sig]\logic{M})$. Thus, 
    \begin{align*}
        \op{Taut}(\greatest[\sig]\logic{L})&=\greatest[\sig]\fragment[\sig]\op{Taut}(\greatest[\sig]\logic{L})\\
        &\subseteq\greatest[\sig]\op{Taut}(\fragment[\sig]\greatest[\sig]\logic{L})\\
        &=\greatest[\sig]\op{Taut}(\logic{L}),
    \end{align*}
    as desired.
    
\end{proof}


\section{Dummett-Lemmon Conjectures}
\label{sec:dummettlemmon}

In this last section we apply stable canonical rules to give an alternative proof of the \emph{Dummett-Lemmon conjecture} for rule systems. This result states that a (b)si rule system is Kripke complete iff its weakest modal companion is. We recall that  a rule system is called  \emph{Kripke complete} if it is of the form $\logic{L}=\{\Gamma/\Delta:\mathcal{K}\models \Gamma/\Delta\}$ for some class of Kripke frames $\mathcal{K}$. We also remind the reader that we will not be discussing msi rule systems in this section.


We will need to introduce and study new operations on stable canonical rules. We first define an operation taking a (b)si stable canonical rule to a modal (resp.\ tense) stable canonical rule  equivalent to the Gödel translation of the former. 
\begin{definition}
	Let $\scrsi{H}{D}$ be a (b)si stable canonical rule. The modal (resp.\ tense) stable canonical rule $\ruletrans{H}{D}$ is defined as the rule $\scrmod{\greatest H}{D_\circ}$, where ${D_\circ}:=( D_\circ^\heartsuit)_{\heartsuit\in \ops{\alg{H}}}$ and 
	\[ D_\circ^\square:=\{\neg a \lor b: (a, b)\in D^\to \}\qquad  D_\circ^\pastlog:=\{a\land \neg b: (a, b)\in D^\from\}.\]
\end{definition}
We call $\ruletrans{H}{D}$ the \emph{modalization} of $\scrsi{H}{D}$. Adopting our conventions for notating stable canonical rules using spaces rather than algebras, given a (b)si stable canonical rule $\scrsi{X}{D}$, the rule $\ruletrans{X}{\spa{D}}$ is just the rule $\scrmod{\greatest X}{\spa{D}}$. 

We call a rule $\scr{M}{D}$ \emph{modalized} when it is the modalization of some (b)si stable canonical rule. Dually, we may characterize modalized rules as follows. 
\begin{lemma}
    A modal (resp.\ tense) $\scrmod{F}{\spa{D}}$ stable canonical rule is modalized precisely when it satisfies the following conditions:\label{trans-dual}
    \begin{enumerate}
        \item $\spa{F}$ is partially ordered.
        \item Every $\spa{d}\in \spa{D}^\square$ is of the form $U\cap V$, where $U$ is an upset and $V$ is a downset. 
        \item If $\spa{D}^\pastlog$ is defined, then so is every  $\spa{d}\in\spa{D}^\pastlog$. \label{trans-dual-3}
    \end{enumerate}
\end{lemma}
\begin{proof}
    If $\scrmod{F}{\spa{D}}$ is modalized, then $\spa{F}$ is the dual of $\sigma \alg{H}$ for some finite (bi)Heyting algebra $\alg{H}$, so it is partially ordered. Furthermore, every $d\in D^\square$ is of the form $\neg a\lor b$ for $a, b\in H$, and so $\spa{d}$ is of the form $-\beta(\neg a\lor b)=\beta(a)\cap -\beta (b)$. But $\beta(a), \beta(b)$ are upsets, and the complement of an upset is a downset. By similar reasoning, we may infer \Cref{trans-dual-3}.

    Conversely, assume $\scrmod{F}{\spa{D}}$ satisfies the three conditions above. Then the dual of $\spa{F}$ is clearly of the form $\sigma \alg{H}$ for some finite (bi)Heyting algebra $\alg{H}$. Given $\spa{d}=U\cap V\in \spa{D}$, by (bi-)Esakia duality there must be $a, b\in H$ with $U=\beta (a)$ and $V=-\beta (b)$. But then $U\cap V=\beta (a)\cap -\beta(b)=-\beta(\neg a\lor b)$, and by definition $d=\neg a\lor b$. Likewise, if $\spa{D}^\pastlog$ is defined and $\spa{d}=U\cup V$ satisfies \Cref{trans-dual-3}, we find that $d= a\land \neg b$ for some $a, b\in H$. 
\end{proof}

We now verify that modalization indeed coincides, up to equivalence over $\logic{S4(.t)}$, with the Gödel translation. 
\begin{lemma}[Rule translation lemma]
	Let $\alg{M}\in \op{Alg}(\logic{S4(.t)})$. For any (b)si stable canonical rule $\scrsi{H}{D}$ we have \label{ruletrans}
	\[\alg{M}\models \ruletrans{H}{D}\iff \alg{M}\models T(\scrsi{H}{D}).\]
\end{lemma}
\begin{proof}
	Let $\spa{X}:=\dualspa{M}$ and $\spa{F}:=\dualspa{H}$. Then $\scrsi{H}{D}=\scrsi{F}{\spa{D}}$ and  $\ruletrans{H}{D}=\scrmod{\greatest F}{\spa{D}}$. 

	$(\Rightarrow)$ Suppose $\spa{X}\not\models T(\scrsi{F}{\spa{D}})$. Then, by \Cref{lem:gtskeleton}, $\fragment\spa{X}\not\models \scrsi{F}{\spa{D}}$. Consequently, there is a stable map $f: \fragment\spa{X}\to \spa{F}$ satisfying the BDC for $\spa{ D}$. We construct a stable map $g: \spa{X}\to \greatest\spa{F}$ that also satisfies the BDC for $\spa{ D}$. To this end, put \[g(x):= f(\varrho (x)).\]
	Now, $g$ is continuous because both $f$ and $\varrho$ are. Moreover, both $f$ and $\varrho$ are relation preserving, whence $g$ is as well. Thus $g$ is a stable map. We check that it satisfes the BDC for $\spa{ D}$. Let $\spa{d}\in \spa{D}^\Uparrow$ and $x\in X$. Suppose there is $y\in \spa{d}$ such that $R g(x) y$. Since $f$ satisfies the BDC$^\Uparrow$ for $\spa{d}$, there must be $\varrho(z)\in \fragment\spa{X}$ such that $\varrho(x)\leq \varrho(z)$ and $f(\varrho(z))=g(z)\in \spa{d}$. Moreover, since $\varrho$ is relation reflecting, we have $R xz$, showing $g$ satisfies the BDC$^\Uparrow$ for $\spa{D}^\Uparrow$. Similarly, $g$ satisfies the BCD$^\Downarrow$ for $\spa{D}^\Downarrow$. Consequently, $\spa{X}\not\models \scrmod{\greatest F}{\spa{D}}$.

	$(\Leftarrow)$ Suppose $\spa{X}\not\models \scrmod{\greatest F}{\spa{D}}$. Then there is a stable map $g:\spa{X}\to \greatest\spa{F}$ satisfying the BDC for $\spa{D}$. We construct a map $f:\fragment\spa{X}\to \spa{F}$ satisfying the BDC for $\spa{D}$. To this end, let 
	\[f(\varrho(x)):=g(x).\]
	Note that $f$ is well defined. For if $x$ and $y$ belong to the same cluster, we must have both $Rf(x)f(y)$ and $Rf(y)f(y)$. But $\spa{F}$ lacks proper clusters, showing $f(x)=f(y)$. Moreover, $f$ is relation preserving and continuous, hence a stable map.  It is relation preserving because $\varrho$ is relation reflecting and $g$ is relation preserving. To see that it is continuous, observe that $g^{-1}(U)$ never cuts clusters for any $U\subseteq F$, then apply \Cref{rhoproperties}. 

	We check that $f$ satisfies the BDC for $\spa{D}$. Let $\spa{d}\in \spa{D}^\Uparrow$ and $\varrho(x)\in \fragment\spa{X}$. Suppose there is $y\in \spa{d}$ such that $Rf\varrho(x)y$. Since $g$ satisfies the BDC$^\Uparrow$ for $\spa{d}$, there must be $z\in X$ such that $Rxz$ and $g(z)=f(\varrho(z))\in \spa{d}$. Since $\varrho$ is relation preserving, $\varrho(x)\leq\varrho(y)$, showing  $f$ satisfies the BCD$^\Uparrow$ for $\spa{D}^\Uparrow$. Similarly, $f$ satisfies the BCD$^\Downarrow$ for $\spa{D}^\Downarrow$. Consequently, $\fragment\spa{X}\not\models \scrsi{F}{\spa{D}}$, which by by \Cref{lem:gtskeleton} implies $\spa{X}\not\models T(\scrsi{F}{\spa D})$. 

    Now, consider the map $g:\spa{G}\to \spa{F}$ given by $g[x]=f(x)$. It is clearly well defined. We claim that $g$ is a stable surjection that satisfies the BDC for $\spa{D}$. Indeed, let $\spa{d}\in \spa{D}$ and $[x]\in Y$, and suppose that $\upset{g[x]}\cap \spa{d}\neq\varnothing$. By $g[x]=f(x)$ and the fact that $f$ satisfies the BDC for $\spa{D}$, it follows that there is $y\in X$ such that $Rxy$ and $g[y]=f(y)\in\spa{d}$, as desired. 
\end{proof}

We now show that every rule $\scrmod{F}{\spa{D}}$ where $\spa{F}$ is a $\logic{Grz(.t)}$-space may be equivalently rewritten as a finite conjunction of modalized stable canonical rules. First, some preliminary definitions. Let $\spa{X}$ be a finite $\logic{Grz}$-space. If $U\subseteq X$, let $\mathit{pas}(U)$ be the set of points that are passive in $U$.  The \emph{chunks} of a set $U\subseteq X$ are defined recursively as follows. We put $\chunk_1(U):=\mathit{pas}(U)$. Assuming $\chunk_i(U)$ has been defined, we put
\[\chunk_{i+1}(U):=\mathit{pas}\left(U\smallsetminus (\chunk_1(U)\cup\cdots\cup \chunk_i(U)\right)\]
whenever the right-hand side is non-empty; we leave $\chunk_{i+1}(U)$ undefined otherwise. Since $\spa{X}$ is finite, every $U\subseteq {X}$ only has finitely many chunks: we let the \emph{chunk height} of $U$ be the number of chunks it has. Moreover, observe that $\chunk_{i}(U)=\mathit{pas}(\chunk_{i}(U))$, for each $i$ less than or equal to the chunk height of $U$.

\begin{lemma}
	Let $\scrmod{M}{D}$ be a stable canonical rule with $\alg{M}\in \op{Alg}(\logic{Grz(.t)})$. Then there is a finite set $\Phi$ of modalized stable canonical rules, such that an $\logic{S4(.t)}$-algebra  $\alg{N}$ refutes  $\scrmod{M}{D}$ iff it refutes some $\scrmod{\greatest H}{E}\in \Phi$. \label{rewritetrans}
    
\end{lemma}
\begin{proof}
We prove the dual statement. To keep things simple, we only show the case of modal spaces; the case of tense spaces is an adaptation of the same argument. 

Let $\spa{F}$ be the dual of $\alg{M}$. Observe that there are, up to isomorphism, only finitely many pairs $(\spa{G}, \spa{E})$ satisfying the following conditions:
\begin{enumerate}
    \item $\spa{G}$ is a finite $\logic{Grz}$-space whose cardinality is at most $|F|\cdot 2^k$, where $k$ is the number of all chunks of any $\spa{d}\in \spa{D}$. 
    \item $\spa{E}=\{g^{-1}(\chunk_i(\spa{d})):\spa{d}\in \spa{D}\text{ and } i\text{ at most the chunk height of $\spa{d}$}\}$, where $g:\spa{G}\to \spa{F}$ is a stable surjection satisfying the BDC for $\spa{D}$. \label{map}
\end{enumerate}
We let $\Phi$ be the set of all  rules $\scrmod{G}{\spa{E}}$ for all such pairs $(\spa{G}, \spa{E})$. 

Note that each rule $\scrmod{G}{\spa{E}}$ is modalized. By definition, $\spa{G}$ is partially order. Moreover, if $g^{-1}(\chunk_i(\spa{d}))\in \spa{E}$, then \[g^{-1}(\chunk_i(\spa{d}))=\upset{g^{-1}(\chunk_i(\spa{d}))}\cap \downset{g^{-1}(\chunk_i(\spa{d}))}.\]
Indeed, if $x\in \upset{g^{-1}(\chunk_i(\spa{d}))}\cap \downset{g^{-1}(\chunk_i(\spa{d}))}$, then there are $y, z\in g^{-1}(\chunk_i(\spa{d}))$ such that  $Rzx$ and $Rxy$. Since $g$ is stable, it follows that $Rg(z)g(x)$ and $Rg(x)g(y)$. But since $\chunk_i(\spa{d})=\mathit{pas}(\chunk_i(\spa{d}))$, we must have $g(x)\in \chunk_i(\spa{d})$, else one could leave and re-enter $\chunk_i(\spa{d})$. Thus, by \Cref{trans-dual}, each $\scrmod{G}{\spa{E}}$ is modalized.

($\Rightarrow$) Let $\spa{X}$ be a $\logic{S4}$-space and suppose $\spa{X}\not\models \scrmod{G}{\spa E}$ for some $\scrmod{G}{\spa E}\in \Phi$. Then there is a stable surjection $f:\spa{X}\to \spa{G}$ satisfying the BDC for $\spa{E}$. Let $g:\spa{G}\to \spa{F}$ be the stable surjection satisfying the BDC for $\spa{D}$ given by \Cref{map} from the definition of $\Phi$. By definition, 
$\spa{E}=\{g^{-1}(\chunk_i(\spa{d})):\spa{d}\in \spa{D}, i\text{ at most the chunk height of $\spa{d}$}\}$.

Consider the map $g\circ f:\spa{X}\to \spa{F}$. We claim that $g\circ f$ is a stable surjection that satisfies the BDC for $\spa{D}$. That $g\circ f$ is surjective follows because both $f, g$ are. Likewise, $g\circ f$ is stable because both $f, g$ are. To check the BDC, take any $x\in X$ and suppose $\upset{g(f(x))}\cap \spa{d}\neq\varnothing$ for some $\spa{d}\in \spa{D}$. Since $g$ satisfies the BDC for $\spa{D}$, there must be some $f(y)\in \spa{G}$ such that $Rf(x)f(y)$ and $g(f(y))\in \spa{d}$. Let $\chunk_i(\spa{d})$ be the unique chunk of $\spa{d}$ such that $g(f(y))\in \chunk_i(\spa{d})$. Then $f(y)\in g^{-1}(\chunk_i(\spa{d}))$. By definition, $g^{-1}(\chunk_i(\spa{d}))\in \spa{E}$. Since $f$ satisfies the BDC for $\spa{E}$, there must be some $z\in X$ such that $Rxz$ and $f(z)\in g^{-1}(\chunk_i(\spa{d}))$. In other words, $g(f(z))\in \chunk_i(\spa{d})\subseteq \spa{d}$. This shows that, indeed, $g\circ f$ satisfies the BDC for $\spa{D}$. We may then conclude $\spa{X}\not\models \scrmod{F}{\spa{D}}$.

($\Leftarrow$) Let $\spa{X}$ be a $\logic{S4}$-space.  Assume $\spa{X}\not\models \scrmod{F}{\spa{D}}$. Then there is a stable surjection $f:\spa{X}\to \spa{F}$ that satisfies the BDC for $\spa{D}$. We  define an equivalence relation on $\spa{X}$ as follows. We put $x\sim y$ when both 
\begin{enumerate}[label=(\roman*), ref=\roman*]
    \item $f(x)=f(y)$, and \label{quotient1}
    \item for every $\chunk_i(\spa{d})$ with $\spa{d}\in \spa{D}$, we have \label{quotient2}
\[\upset{x}\cap f^{-1}(\chunk_i(\spa{d}))\neq\varnothing\iff\upset{y}\cap f^{-1}(\chunk_i(\spa{d}))\neq\varnothing.\]
\end{enumerate}
In other words, $x\sim y$ holds when $x$ and $y$ have the same image under $f$, and ``see'' the $f$-preimages of exactly the same chunks of domains from $\spa{D}$. We write $[x]$ for the equivalence class of $x$ under $\sim$. Next, we define a relation on equivalence classes of $\sim$. We put $R[x][y]$ when both
\begin{enumerate}[label=(\roman*),ref=\roman*,resume]
    \item $Rf(x)f(y)$, and \label{quotient3}
    \item for every $\chunk_i(\spa{d})$ with $\spa{d}\in \spa{D}$, we have  \label{quotient4}
    \[\upset{y}\cap f^{-1}(\chunk_i(\spa{d}))\neq\varnothing\Rightarrow\upset{x}\cap f^{-1}(\chunk_i(\spa{d}))\neq\varnothing.\]
\end{enumerate}
It should be clear that $R$ is well defined. We let $\spa{Y}$ denote the modal space that results from equipping the quotient of $\spa{X}$  under $\sim$ with $R$. 

Observe that, by definition, $\sim$ refines the partition whose cells are points with the same $f$-images. The cardinality of that partition is clearly $|F|$. But $\sim$ splits each cell of this partition into at most $2^k$ sub-cells,  where $k$ is the total number of chunks of any $\spa{d}\in \spa{D}$. Consequently, the cardinality of $\spa{G}$ is at most $|F|\cdot 2^k$, as required by the definition of $\Phi$. 

Furthermore, we claim that $\spa{G}$ is a $\logic{Grz}$-space. Since $\spa{G}$ is finite, we need only check that its relation $R$ is a partial order. $R$ is clearly reflexive and transitive. For antisymmetry, assume $R[x][y]$ and $R[y][x]$. By (\ref{quotient3}), we have $Rf(x)f(y)$ and $Rf(y)f(x)$, which implies $f(x)=f(y)$ because $\spa{F}$ is partially ordered. Moreover, by (\ref{quotient4}), we have that $\upset{y}\cap f^{-1}(\chunk_i(\spa{d}))\neq\varnothing$ holds exactly when $\upset{x}\cap f^{-1}(\chunk_i(\spa{d}))\neq\varnothing$ does, for each $\chunk_i(\spa{d})$ with $\spa{d}\in \spa{D}$. But then $[x],[y]$ meet conditions (\ref{quotient1}) and (\ref{quotient2}), showing $[x]=[y]$. 

Let us define a map $g: \spa{G}\to \spa{F}$ by putting $g[x]=f(x)$. Then $g$ is a stable surjection satisfying the BDC for $\spa{D}$. Indeed, $g$ is surjective because $f$ and the quotient map both are. Moreover, the way we defined the relation of $\spa{Y}$ immediately implies that $g$ is relation preserving, and continuity  follows from the finiteness of $\spa{G}$. For the BDC, suppose $\upset{g[x]}\cap \spa{d}\neq\varnothing$ for some $\spa{d}\in \spa{D}$. Then $\upset{f(x)}\cap\spa{d}\neq\varnothing$.  Since $f$ satisfies the BDC for $\spa{D}$, there must be $y\in X$ with $Rxy$ and $f(y)=g[y]\in \spa{d}$. Since $Rxy$ implies $R[x][y]$, this shows that $g$ satisfies the BDC for $\spa{D}$. 

Via $g$, we may then define, in accordance with the definition of $\Phi$, 
\[\spa{E}=\{g^{-1}(\chunk_i(\spa{d})):\spa{d}\in \spa{D}, i\text{ at most the chunk height of $\spa{d}$}\}.\]
It follows that $\scrmod{G}{\spa E}\in \Phi$. 

We show that $\spa{X}\not\models \scrmod{G}{\spa E}$, by showing that the quotient map  $x\mapsto [x]$ is a stable surjection satisfying the BDC for $\spa{E}$. The quotient map is clearly relation preserving and surjective. Moreover, it is continuous, because each equivalence class under $\sim$ is definable as a finite intersection of clopens. Thus, it is a stable surjection. Let us check the BDC. Let $x\in X$ and $g^{-1}(\chunk_i(\spa{d}))\in \spa{E}$, and suppose $\upset{[x]}\cap g^{-1}(\chunk_i(\spa{d}))\neq\varnothing$. This means that there is $[y]\in g^{-1}(\chunk_i(\spa{d}))$ such that $R[x][y]$. By the definition of $g$, this is to say $y\in f^{-1}(\chunk_i(\spa{d}))$. A fortiori, $\upset{y}\cap f^{-1}(\chunk_i(\spa{d}))\neq\varnothing$. By condition (iv) in the definition of $R$, we may then infer that $\upset{x}\cap f^{-1}(\chunk_i(\spa{d}))\neq\varnothing$. In other words, there must be $z\in X$ with $Rxz$ and $f(z)\in \chunk_i(\spa{d})$. But $Rxz$ implies $R[x][z]$, and $f(z)\in \chunk_i(\spa{d})$ is equivalent to $[z]\in g^{-1}(\chunk_i(\spa{d}))$, as desired. 
\end{proof}

\begin{remark}
    It is a straightforward consequence of the Blok-Esakia theorem and \Cref{ruletrans} that every modal (resp.\ tense) rule is equivalent over $\logic{Grz(.t)}$ to a set  of modalized stable canonical rules. Indeed, given a modal rule $\Gamma/\Delta$, the modal rule system $\logic{Grz(.t)}\oplus\Gamma/\Delta$ must be of the form $\sigma \logic{L}$, for some (b)si rule system $\logic{L}$. We know that $\logic{L}$ must be axiomatizable, over $\logic{(bi)IPC}$, by a set of (b)si stable canonical rules $\Psi$. But then $\sigma\logic{L}=\logic{Grz(.t)}\oplus \{\ruletrans{H}{D}: \scrsi{H}{D}\in \Psi\}$ by \Cref{ruletrans}, which is to say that $\Gamma/\Delta$ is equivalent, over $\logic{Grz(.t)}$, to $\{\ruletrans{H}{D}: \scrsi{H}{D}\in \Psi\}$. Furthermore, one direction of this equivalence remains true over $\logic{S4(.t)}$. Indeed,  $\rho (\logic{S4(.t)}\oplus \Gamma/\Delta)=\rho(\logic{Grz(.t)}\oplus \Gamma/\Delta)$, so by \Cref{ruletrans} we have $\tau\rho(\logic{S4(.t)}\oplus \Gamma/\Delta)=\logic{S4(.t)}\oplus \{\ruletrans{H}{D}: \scrsi{H}{D}\in \Psi\}\subseteq \logic{S4(.t)}\oplus \Gamma/\Delta$. This is to say $\Gamma/\Delta$ implies each $\ruletrans{H}{D}$ over $\logic{S4(.t)}$. 

    These observations do not imply \Cref{rewritetrans}: for all we have said, $\Psi$  might be infinite, and the above reasoning does not establish that both directions of the equivalence go through when restricting attention to rules based on $\logic{Grz(.t)}$-spaces. That being said, the observations in this remark would be enough to carry out our proof of the Dummett-Lemmon conjecture. This is the strategy followed by \cite{Liao2023SCRfIML} in a generalization of our technique. We chose to rely on \Cref{rewritetrans} because we find the construction it employs independently interesting. A similar construction can be used to establish that $\logic{Grz(.t)}$ admits filtration, albeit in a somewhat non-standard sense. See \cite[Thm. 2.74]{Cleani2021TEVSCR}. 
\end{remark}

fGrz The last notion we need to introduce is that of a \emph{collapsed} stable canonical rule. 
\begin{definition}
	Let $\scrmod{M}{D}$ be a stable canonical rule with $\alg{M}\in \op{Alg}(\logic{S4(.t)})$. The \emph{collapsed stable canonical rule} is defined as the rule $\scrmod{\greatest\fragment M}{\greatest\fragment D}$, where ${\greatest\fragment D}:=(\greatest \fragment D^\heartsuit)_{\heartsuit\in \ops{\alg{M}}}$ and 
	\[\greatest\fragment D^\heartsuit:=\left\{ \bigwedge\{b\in B(O(\alg{M})): a\leq b\} : a\in D^\heartsuit\right\}.\]
\end{definition}

To understand the intuition behind collapsed rules, it is helpful to characterize them dually. Observe that the mapping on $\alg{M}$ given by
\[a\mapsto \bigwedge\{b\in B(O(\alg{M})): a\leq b\}\]
is the algebraic dual of the cluster collapse map on $\dualspa{M}$, in the sense that 
\[\beta \left(\bigwedge\{b\in B(O(\alg{M})): a\leq b\}\right)= \varrho[\beta (a)].\]
Consequently, the collapsed rule $\scrmod{\greatest\fragment M}{\greatest\fragment D}$ is identical to the rule $\scrmod{\greatest \fragment \dualspa{M}}{\greatest\fragment\spa{D}}$, where $\greatest\fragment\spa{D}$ is obtained by setting 
\begin{align*}
	\greatest\fragment \mathfrak{D}^\heartsuit&:=\{\varrho [\mathfrak{d}]: \mathfrak{d}\in \mathfrak{D}^\heartsuit\}\qquad &\heartsuit\in \{\Uparrow, \Downarrow\}\\
	{\greatest\fragment\spa{D}}&:=(\greatest\fragment \spa{D}^\heartsuit)_{\spa{D}^\heartsuit\in\spa{ D}}. &
\end{align*}
In other words, $\scrmod{\greatest\fragment \dualspa{M}}{\greatest\fragment \mathfrak{D}}$ is obtained from $\scrmod{F}{\mathfrak{D}}$ by collapsing all clusters in $\mathfrak{F}$ and in the sets of domains $\mathfrak{D}^\heartsuit$ as well.

Collapsed rules obey the following refutation condition on spaces and Kripke frames.
\begin{lemma}[Rule Collapse Lemma]
	For all $\mathfrak{X}\in \mathsf{Spa}(\logic{S4(.t)})$ and any stable canonical rule $\scrmod{F}{\mathfrak{D}}$ such that $\mathfrak{F}\in \mathsf{Spa}(\logic{S4(.t)})$, if $\mathfrak{X}\nvDash \scrmod{F}{\mathfrak{D}}$, then $\greatest\fragment \mathfrak{X}\nvDash \scrmod{\greatest\fragment F}{\greatest\fragment \mathfrak{D}}$.\label{rulecollapse} Moreover, the same holds if $\mathfrak{X}$ is a reflexive and transitive Kripke frame.
\end{lemma}
\begin{proof}
	Assume $\mathfrak{X}\nvDash\scrmod{F}{\mathfrak{D}}$. Then there is a stable map $f:\mathfrak{X}\to \mathfrak{F}$ that satisfies the BDC for $\mathfrak{ D}$. Consider the map $g:\greatest\fragment\mathfrak{X}\to \greatest\fragment\mathfrak{F}$ given by
	\[g(\varrho(x))=\varrho(f(x)).\]
	Now $R\varrho(x)\varrho(y)$ implies $Rxy$, and since $f$ is relation preserving also $Rf(x)f(y)$, which implies $R\varrho(f(x)) \varrho(f(y))$. So $g$ is relation preserving. Furthermore, again because $f$ is relation preserving we have that for any $U\subseteq F$, the set $f^{-1}(U)$ does not cut clusters, whence $g^{-1}(U)=\varrho[f^{-1}(\varrho^{-1}(U))]$ is clopen for any $U\subseteq \varrho[F]$, as $\fragment\mathfrak{X}$ has the quotient topology. Thus, $g$ is continuous. 
	
	Let us check that $g$ satisfies the BDC for ${\greatest\fragment\mathfrak{D}}$. Let $\spa{d}\in \spa{D}^\Uparrow$ and suppose  that ${\Uparrow} g(\varrho(x))\cap \varrho[\mathfrak{d}]\neq \varnothing$. Then there is some $\varrho(y)\in \varrho[F]$ with $R\varrho(f(x))\varrho(y)$ and $\varrho(y)\in \varrho[\mathfrak{d}]$. By construction, wlog we may assume that $y\in \mathfrak{d}$. As $\varrho$ is relation reflecting it follows that $Rf(x) y$, and so we have that ${\Uparrow}[ f(x)]\cap \mathfrak{d}\neq\varnothing$. Since $f$ satisfies the BDC$^\Uparrow$ for $\mathfrak{D}$ we conclude that $f[{\Uparrow} x]\cap \mathfrak{d}\neq\varnothing$. So, there is some $z\in X$ with $Rxz$ and $f(z)\in \mathfrak{d}$. By definition, $\varrho(f(z))\in \varrho[\mathfrak{d}]$. Hence we have shown that $\varrho[f[{\Uparrow}x]]\cap \varrho[\mathfrak{d}]\neq\varnothing$, and so $g$ indeed satisfies the BDC$^\Uparrow$ for $\mathfrak{D}^\Uparrow$. Similarly, $g$ indeed satisfies the BDC$^\Downarrow$ for $\mathfrak{D}^\Downarrow$. The case where $\spa{X}$ is a Kripke frame is analogous. 
\end{proof}

 We are now ready to prove the Dummett-Lemmon conjecture for rule systems. 
\begin{theorem}[Dummett-Lemmon conjecture for rule systems]
	A (b)si rule system $\logic{L}$ is Kripke complete iff $\least\logic{L}$ is.\label{dummettlemmon}
\end{theorem}
\begin{proof}
	$(\Rightarrow)$ Let $\logic{L}$ be Kripke complete. Suppose that $\Gamma/\Delta\notin \least\logic{L}$. Then there is $\spa{X}\in \op{Spa}(\least\logic{L})$ such that $\spa{X}\nvDash \Gamma/\Delta$. By \Cref{rewrite}, we may assume, wlog, that $\Gamma/\Delta=\scrmod{F}{\mathfrak{D}}$ for $\mathfrak{F}$ a preorder. By the Rule Collapse lemma, it follows that $\greatest\fragment\spa{X}\not\models \scrmod{\greatest\fragment F}{\greatest\fragment \spa{D}}$. Let $\Phi$ be the set of modalized stable canonical rules whose conjunction is equivalent, over $\logic{S4(.t)}$, to $\scrmod{\greatest\fragment F}{\greatest\fragment \spa{D}}$, given by \Cref{rewritetrans}. Then, by \Cref{rewritetrans}, there is a modalized stable canonical rule $\scrmod{\greatest G}{\spa{E}}\in \Phi$ such that $\greatest \fragment\spa{X}\not\models \scrmod{\greatest G}{\spa{E}}$. By the Rule Translation Lemma, it follows that $\fragment\spa{X}\not\models \scrsi{G}{\spa{E}}$. By \Cref{mcsemantic} and the fact that $\spa{X}\in \op{Spa}(\least\logic{L})$ it follows that $\fragment\spa{X}\in \op{Spa}(\logic{L})$. Consequently, $\scrsi{G}{\spa{E}}\notin \logic{L}$. Since $\logic{L}$ is Kripke complete, there must be a (b)si Kripke frame $\spa{Y}$ such that $\spa{Y}\not\models \scrsi{G}{\spa{E}}$. Therefore, by the Rule Translation Lemma again, $\greatest \spa{Y}\not\models \scrmod{\greatest G}{\spa{E}}$. Since $\greatest\spa{Y}$ is an $\logic{S4}$-Kripke frame, by \Cref{rewritetrans} and \Cref{kripkedual} it follows that $\greatest\spa{Y}\not\models \scrmod{\greatest \fragment F}{\greatest \fragment \spa{D}}$. Thus, there is a stable map $f:\greatest\spa{Y}\to \greatest \fragment \spa{F}$ satisfying the BDC for $\greatest\fragment \spa{D}$. 

	The goal now is to construct, from $\greatest\spa{Y}$ and $f$ respectively, a Kripke frame $\spa{Z}$ for $\logic{\least L}$ and a stable map $g:\spa{Z}\to \spa{F}$ satisfying the BDC for $\spa{D}$. We do so as follows. For each $x\in \varrho[F]$, enumerate $\varrho^{-1}(x):=\{x_1, \ldots, x_{k_x}\}$. Working in $\greatest\spa{Y}$, for each $y\in f^{-1}(x)$, replace $y$ with a $k_x$-cluster $y_1, \ldots, y_{k_x}$, and extend the relation $R$ clusterwise: $Ry_iz_j$ iff either $y=z$ or $Ryz$. This constitutes our Kripke frame $\spa{Z}$. Note that $\spa{Z}\models \logic{\least L}$, because $\fragment \spa{Z}\cong \spa{Y}$ (\Cref{mcsemantic}). For convenience, we identify $\fragment\spa{Z}$ and $\spa{Y}$. For every $x\in \varrho [F]$ define a map $g_x:f^{-1}(x)\to \varrho^{-1}(x)$ by setting $g_x(y_i)=x_i$ ($i\leq k_x$). Finally, define $g:\spa{Z}\to \spa{F}$ by putting $g=\bigcup_{x\in \varrho[F]} g_x$.

	The map $g$ is evidently well defined, surjective, and relation preserving. We claim that moreover, it satisfies the BDC for $\spa{D}$. To see this, let $\spa{d}\in \spa{D}^\Uparrow$ and suppose that ${\Uparrow} g(y_i) \cap \spa{d}\neq\varnothing$. Then there is $x_j\in F$ with $x_j\in \spa{d}$ and $Rg(y_i)x_j$. By construction also $\varrho(x_j)\in \varrho [\spa{d}]$ and $R f(\varrho (y_i))\varrho(x_j)$. As $f$ satisfies the BDC$^\Uparrow$ for $\greatest\fragment\spa{D}^\Uparrow$ it follows that there is some $z\in Y$ such that $R\varrho(y_i)z$ and $f(z)\in \varrho[\spa{d}]$. We may view $z$ as $\varrho(z_n)$ where $\varrho^{-1}(f(z))$ has cardinality $k\geq n$. Surely $Ry_iz_n$. Furthermore, since $f(z)\in \varrho[\spa{d}]$ there must be some $m\leq k$ such that $f(z)_m=g(z_m)\in \spa{d}$. By construction $Rz_nz_m$ and so in turn $Ry_iz_m$. This establishes that $g$ indeed satisfies the BDC$^\Uparrow$ for $\spa{D}^\Uparrow$. Analogous reasoning shows that $g$ satisfies the BDC$^\Downarrow$ for $\spa{D}^\Downarrow$.  Thus we have shown $\spa{Z}\nvDash \scrmod{F}{\spa{D}}$. Since $\spa{Z}\models \least\logic{L}$, it follows that $\least \logic{L}$ is Kripke complete.
\end{proof}

\section{Conclusion and Further Work}
\label{sec:conclusion}

This paper presented a novel approach to the study of modal companions and related notions based on stable canonical rules. We hope to have shown that our method is effective and quite uniform. With only minor adaptations to a fixed collection of techniques, we provided a unified treatment of the theories of modal and tense companions, and of the Kuznetsov-Muravitsky isomorphism. We both offered alternative proofs of classic theorems and established new results. 


The techniques presented in this paper are based on a blueprint easily applicable across signatures. Stable canonical rules can be formulated for any class of algebras which admits a locally finite expandable reduct in the sense of \cite[Ch. 5]{Ilin2018FRLoSNCL}, and once stable canonical rules are available there is a clear recipe for adapting our strategy to the case at hand. We propose that further research be done in this direction, in particular addressing the following topics.

Firstly, there are several more general notions of a msi rule system than that we have been working on, and one could try and study the theory of modal companions of such msi rule systems using our method. Some work in this direction has already been done. \cite{Liao2023SCRfIML} uses our methods to study \emph{bimodal} companions of rule systems over $\logic{IPC}\otimes\logic{S4}$. But there are more general settings to consider. For example, one can try replacing $\logic{S4}$ to a weaker modal logic, or consider systems in a richer signature with a primitive possibility operator. \cite{WolterZakharyaschev1997IMLAFoCBL,WolterZakharyaschev1997OtRbIaCML} give a very general definition of a msi logics, subsuming the cases we just mentioned, and study the theory of their polymodal companions. We conjecture that our techniques can recover several of the main known results in this area and generalize them to rule systems. 

A second avenue for further research is the theory of modal companions of  extensions of the \emph{Heyting-Lewis logic}, which expands superintuitionistic logic with a strict implication connective. Early work in this area began with \citet{GrootEtAl2021GMTaBEfHLI}, and \cite{Liao2023SCRfIML} more recently applied our methods to this setting. However, several results remain open, including whether an analogue of the Blok-Esakia theorem holds.


\vspace{4mm}

\noindent
{\bf Acknowledgment.} We are very grateful to the referee for many useful comments that led to a substantial restructuring and shortening of the paper. The referee also spotted an error in one of the original proofs. We are also thankful to Rodrigo Almeida, Guram Bezhanishvili, Cheng Liao, and Tommaso Moraschini for helpful comments and interesting discussions. 

\vspace{4mm}

\noindent
{\bf Funding.} The second author acknowledges the support of the USC Dornsife Graduate School.


	\bibliographystyle{default}
	\bibliography{database.bib}
\end{document}